\setlist[enumerate]{topsep=0pt,itemsep=-1ex,partopsep=1ex,parsep=1ex}
\setlist[itemize]{topsep=0pt,itemsep=-1ex,partopsep=1ex,parsep=1ex}
\patchcmd{\thebibliography}{\leftmargin\labelwidth}{\leftmargin\labelwidth\addtolength\itemsep{-0.1\baselineskip}}{}{}
\author{Boris Bukh\thanks{Department of Mathematical Sciences, Carnegie Mellon University, Pittsburgh, PA 15213, USA\@. Supported in part by U.S.\ taxpayers through NSF grant DMS-2154063.} \and Peter van Hintum\thanks{New College, University of Oxford, UK.} \and Peter Keevash\thanks{Mathematical Institute, University of Oxford, UK. Supported by ERC Advanced Grant 883810.}}
\title{Additive Bases: Change of Domain}
\date{}
\newtheorem{theorem}{Theorem}
\newtheorem{lemma}[theorem]{Lemma}
\newtheorem{conjecture}[theorem]{Conjecture}
\newtheorem{question}[theorem]{Question}
\newcommand*{\eqdef}{\stackrel{\mbox{\normalfont\tiny def}}{=}}  
\newcommand*{\veps}{\varepsilon}                                 
\DeclarePairedDelimiter\abs{\lvert}{\rvert}                      
\newcommand*{\F}{\mathbb{F}}                                     
\newcommand*{\R}{\mathbb{R}}                                     
\newcommand*{\N}{\mathbb{N}}                                     
\newcommand*{\Z}{\mathbb{Z}}                                     
\newcommand*{\Q}{\mathbb{Q}}                                     
\newcommand*{\allone}{\mathbf{1}}                                
\begin{document}
\maketitle

\begin{abstract}
We consider two questions of Ruzsa on how the minimum size 
of an additive basis $B$ of a given set $A$ depends on the domain of $B$.
To state these questions, for an abelian group $G$ and $A \subseteq D \subseteq G$ 
we write $\ell_D(A) \eqdef \min \{ |B|: B \subseteq D, \ A \subseteq B+B \}$.
Ruzsa asked how much larger can $\ell_{\Z}(A)$ be than $\ell_{\Q}(A)$ for  $A\subset\Z$, 
and how much larger can $\ell_{\N}(A)$ be than $\ell_{\Z}(A)$ for $A\subset\N$. 
For the first question we show that if $\ell_{\Q}(A) = n$ then $\ell_{\Z}(A) \le 2n$,
and that this is tight up to an additive error of at most $O(\sqrt{n})$.
For the second question, we show that if $\ell_{\Z}(A) = n$ then $\ell_{\N}(A) \le O(n\log n)$,
and this is tight up to the constant factor. We also consider these questions for higher order bases.
Our proofs use some ideas that are unexpected 
in this context, including linear algebra and Diophantine approximation.
\end{abstract}

\section{Introduction}

Many classical questions of Combinatorial Number Theory concern additive bases,
which are sets $B$ such that any natural number $n$ can be expressed as a sum
of $k$ elements of $B$, for some fixed $k$ called the order of the basis.
There is a highly developed theory for the classical bases (see Nathanson \cite{nathanson2013additive}),
where $B$ is the set of $r$th powers for some fixed $r$ (Waring's problem, resolved by Hilbert)
or the set of primes (the Schnirel'man-Goldbach Theorem). More generally, a set $B$ 
is a \emph{$k$-basis} for $A$ if every element of $A$
can be written as a sum of $k$ elements of~$B$.
For example, the Goldbach Conjecture states that the set of primes
is a $2$-basis for the set of even  numbers $n>2$,
and the Ternary Goldbach Conjecture, proved by Helfgott \cite{helfgott2013ternary},
states that the set of primes is a $3$-basis for the set of odd numbers $n>5$.
Besides having clear intrinsic interest, such questions have also spurred
many developments in Analytic Number Theory and Harmonic Analysis.

\paragraph{Additive bases of finite sets.}

A related natural research direction, initiated by Erd\H{o}s and Newman \cite{MR0453681},
considers some fixed finite set $A$ and asks for the smallest set $B$ such that $A \subseteq B+B$.
They considered a random set $A$ of size $n$ inside $\{1,\dots,N\}$,
where for $N>n^{2+c}$ they showed that typically one needs $|B| > nc/(1+c)$.
For $N=n^2$ they posed the question of whether for all $A$ of size $n$ 
one can take $|B|=o(n)$; this was proved by Alon, Bukh and Sudakov \cite{alon2009discrete},
who showed that one can take $|B| = O(n\log\log n/\log n)$.
On the other hand, it remains open (see \cite[Problem 2.8]{aimproblems} posed by Wooley)
to give good lower bounds on $|B|$ for specific natural sets $A$, such as $A = \{ i^d: 1 \le i \le n\}$. Furthermore, even for $A$ with strong multiplicative structure, 
for which it should be particularly hard to find an additive basis, there is 
a massive literature on the `sum-product phenomenon', which still falls far short
of the classical conjecture by Erd\H{o}s \cite{erdosproblems}, 
and has broad connections to a range of topics including 
growth and expansion in groups, exponential sums 
and the Kakeya Problem (see the survey by Bourgain \cite{bourgain2013around}).

In this paper, we consider two questions of Ruzsa \cite{ruzsa_question} 
on how the minimum size of an additive basis $B$ of a given set $A$ depends on the domain of $B$.
To state these questions, for an abelian group $G$ and $A \subseteq D \subseteq G$ we write
\[ \ell_D(A) \eqdef \min \{ |B|: B \subseteq D, \ A \subseteq B+B \}. \]
Ruzsa asked how much larger can $\ell_{\Z}(A)$ be than $\ell_{\Q}(A)$ for  $A\subset\Z$, 
and how much larger can $\ell_{\N}(A)$ be than $\ell_{\Z}(A)$ for $A\subset\N$. 
Here we write $\Q$ for the rationals, $\Z$ for the integers, 
and $\N\eqdef \{0,1,2,\dotsc\}$ for the natural numbers.
Our main results give fairly tight answers to both of these questions.

\paragraph{Additive bases in \texorpdfstring{$\Q$}{Q} versus \texorpdfstring{$\Z$}{Z}.}

Our results here apply more generally to additive bases of any order.
We write $kB = \{ b_1 + \dots + b_k: b_1,\dots,b_k \in B\}$ for the $k$-fold sumset of $B$.
For an abelian group $G$ and $A \subseteq D \subseteq G$ we write
\[ \ell^{(k)}_D(A) \eqdef \min \{ |B|: B \subseteq D, \ A \subseteq kB \}. \]
In the following theorems we consider sets $A$ of integers that have
$k$-bases of size $n$ over the rationals. We show that such sets $A$
have $k$-bases of size $2n$ over the integers, and that this bound
is tight up to a lower order term. Here the upper bound (\Cref{thm:QvsZupperbound})
is fairly straightforward, so the main interest is in the lower bound (\Cref{thm:QvsZlowerbound}).

\begin{theorem}\label{thm:QvsZupperbound}
Every set $A$ satisfying $\ell_{\Q}^{(k)}(A)\leq n$ also satisfies $\ell_{\Z}^{(k)}(A)\leq 2n$.
\end{theorem}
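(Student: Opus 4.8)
The plan is to start from a rational $k$-basis and round it. Fix $B\subseteq\Q$ with $|B|\le n$ and $A\subseteq kB$, and set
\[ B'\eqdef\{\lfloor b\rfloor : b\in B\}\cup\{\lceil b\rceil : b\in B\}. \]
Then $B'\subseteq\Z$ and $|B'|\le 2|B|\le 2n$, so the only thing to prove is $A\subseteq kB'$, i.e.\ that every $a\in A$ is a sum of $k$ elements of $B'$.

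The key step is a rounding argument. Given $a\in A$, write $a=x_1+\dots+x_k$ with $x_1,\dots,x_k\in B$ (repetition allowed), and decompose $x_j=m_j+f_j$ with $m_j=\lfloor x_j\rfloor\in\Z$ and $f_j=\{x_j\}\in[0,1)$. Since $a$ and the $m_j$ are integers, $s\eqdef\sum_{j=1}^k f_j=a-\sum_j m_j$ is a nonnegative integer. The point is that $s$ is never too large: if $s\ge 1$ then the set $T\eqdef\{j:f_j>0\}$ satisfies $s=\sum_{j\in T}f_j<|T|$, since each $f_j<1$; hence we may choose $S\subseteq T$ with $|S|=s$. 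Now replace $x_j$ by $\lceil x_j\rceil=m_j+1\in B'$ for $j\in S$ and by $\lfloor x_j\rfloor=m_j\in B'$ for $j\notin S$. These are $k$ elements of $B'$ summing to $\sum_j m_j+|S|=\sum_j m_j+s=a$, so $a\in kB'$, as desired.

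I do not expect any real obstacle here: the argument is short, and the only thing to verify carefully is the elementary counting inequality $s<|\{j:f_j>0\}|$ that guarantees the rounding can be carried out, together with the trivial bound $|B'|\le 2n$. (An equivalent route is to pick a common denominator $q$ and work with the numerators modulo $q$, but the floor/ceiling version is cleaner and avoids choosing $q$.)
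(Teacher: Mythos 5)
Your proof is correct and takes essentially the same approach as the paper: you form the same rounded set $\{\lfloor b\rfloor\}\cup\{\lceil b\rceil\}$, and your explicit count of how many summands to round up (via $s=\sum_j f_j<|\{j:f_j>0\}|$) is just a direct version of the paper's discrete intermediate-value argument of switching floors to ceilings one at a time.
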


\begin{theorem}\label{thm:QvsZlowerbound}
  For all $k,n\geq 2$ there is an $A\subset \Z$ satisfying $\ell_{\Q}^{(k)}(A)\leq n$
  and $\ell_{\Z}^{(k)}(A)\geq 2n-k^4 n^{1-1/k}$.
\end{theorem}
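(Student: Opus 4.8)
The plan is to construct a single explicit family $A=A(k,n)\subseteq\Z$ realising the bound, designed so that the rational upper bound $\ell_{\Q}^{(k)}(A)\le n$ is witnessed directly by the defining set and all the difficulty is in the lower bound. Concretely I would take an $n$-element set $B=\{b_i : i\in[d]^k\}\subseteq\Q$ of rationals with a common denominator, indexed by a $k$-dimensional grid of side $d=\lfloor n^{1/k}\rfloor$, and tune the fractional part of each $b_i$ so that (i)~each $k$-fold sum of elements of $B$ along a suitable family of ``lines'' of the grid is an integer, and (ii)~every integer so obtained has an essentially unique representation of this form; then set $A\eqdef kB\cap\Z$. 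With such a $B$ the bound $\ell_{\Q}^{(k)}(A)\le|B|=n$ is immediate, so the content is the matching lower bound. One must be careful here to avoid endowing $A$ with spurious additive structure: e.g.\ taking all fractional parts equal to $1/k$ makes $A$ a mere translate of the $k$-fold sumset of an integer set, and such $A$ do not force $\ell_{\Z}^{(k)}$ anywhere near $2n$; so the fractional parts have to be genuinely mixed, which is the key constraint shaping the construction.

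For the lower bound, fix any $B'\subseteq\Z$ with $A\subseteq kB'$. For each $a\in A$, compare its (essentially unique, by~(ii)) $\Q$-representation $a=b_{i_1}+\dots+b_{i_k}$ with some fixed $\Z$-representation $a=b_1'+\dots+b_k'$: the rounding errors $b_t'-b_{i_t}$ are integers up to the prescribed fractional parts and must sum to $0$, which tightly restricts how the rounding is distributed among the $k$ summands. The aim is then a propagation argument along the grid: special elements $a$ pin down an individual $b_i$, forcing $B'$ to contain the two distinct integers $\lfloor b_i\rfloor$ and $\lceil b_i\rceil$ (so that $B'$ must essentially contain the set $\lfloor B\rfloor\cup\lceil B\rceil$ realising the upper bound of \Cref{thm:QvsZupperbound}), and adjacency of grid cells propagates this, so $B'$ must contain $\approx 2n$ distinct integers apart from cells near the boundary of $[d]^k$. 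Since that boundary has $O(k\,d^{k-1})=O(k\,n^{1-1/k})$ cells, allowing for the slack at each such cell and a factor polynomial in $k$ from the step below, this gives $\ell_{\Z}^{(k)}(A)\ge 2n-O(k^4 n^{1-1/k})$.

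The hard part will be the rigidity behind this propagation — ruling out a globally clever $B'$ that places integers far from the $b_i$ and uses cancellations to cover $A$ with far fewer than $2n$ elements. Here I expect to need the linear-algebra input advertised in the abstract: encode the constraints ``$a$ is a sum of $k$ elements of $B'$'' (with the combinatorial pattern of each sum fixed by the $\Q$-structure) as a linear system in the unknown elements of $B'$, and show via a rank computation exploiting the grid structure of $A$ that the system has no solution in fewer than $2n-O(k^4 n^{1-1/k})$ unknowns, i.e.\ that the relevant constraint matrix has rank close to $2n$. A Diophantine-approximation ingredient may additionally be needed to certify~(ii): that no unintended $k$-fold sum of the $b_i$ accidentally becomes an integer and that the intended representations are genuinely unique. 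Granting the construction and this rank statement, the remaining steps — the trivial bound $\ell_{\Q}^{(k)}(A)\le n$ and the accounting of the $O(k^4 n^{1-1/k})$ exceptional cells — should be routine.
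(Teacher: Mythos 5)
Your plan is genuinely different from the paper's, but it is not a proof: the two hard steps are deferred rather than carried out. The paper never constructs an explicit $A$ at all. It argues by contradiction over \emph{all} $A$ of the form $kC+1$ with $C\subset k\Z$: the hypothesis that every such $A$ has a small integer basis is fed into a pigeonhole argument over the finitely many linear systems describing possible integer representations (\Cref{lem:matrices}), which yields one system solvable over $\Q$ for \emph{every} right-hand side; this is then instantiated at the standard basis vectors to produce a ``vector model'' $B_0,\dots,B_{k-1}\subset\Q^n$ (\Cref{lem:vectorization}), and a dimension count with coordinate subspaces (\Cref{lem:coord}) gives the contradiction. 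Your route --- one explicit grid-indexed $A$ plus a rigidity argument for that specific set --- would, if completed, be a stronger and more concrete result, but the completion is exactly where the difficulty lies.

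Concretely, there are two gaps. First, the construction exists only as a wish list: you do not exhibit fractional parts satisfying (i) and (ii) simultaneously, and (ii) (uniqueness of representations, no accidental integrality of unintended $k$-fold sums) is a nontrivial condition that you explicitly defer. Second, and more seriously, your lower bound presumes a term-by-term pairing between an arbitrary integer representation $a=b_1'+\dots+b_k'$ and the rational one (``the rounding errors $b_t'-b_{i_t}$ \dots must sum to $0$''). For an adversarial $B'$ no such pairing exists: the elements of $B'$ need not lie near any $b_i$, and the combinatorial pattern of which elements of $B'$ represent which $a\in A$ is a priori unconstrained. Your proposed linear system in the unknowns of $B'$ is only defined once such a pattern is fixed, and there are exponentially many patterns; ruling them all out for a single fixed $A$ is the entire content of the theorem, and the proposal offers no mechanism for it. The paper's \Cref{lem:matrices} is precisely the device that tames this multiplicity, but it works only because the hypothesis quantifies over infinitely many integer instances $C'$, which can be pigeonholed over the finitely many systems --- a move unavailable for your single explicit $A$. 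As it stands, the proposal therefore establishes the easy inequality $\ell_{\Q}^{(k)}(A)\le n$ but not the lower bound.
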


\paragraph{Additive bases in \texorpdfstring{$\Z$}{Z} versus \texorpdfstring{$\N$}{N}.}

Next we consider sets $A$ of natural numbers 
that have $k$-bases of size $n$ over the integers;
our task here is to find small $k$-bases for $A$ over the natural numbers.
For $k=2$ we find that it is no longer possible to find such sets of linear size;
in fact, the minimum size is $\Theta(n \log n)$. The following theorems
give upper and lower bounds that match up to a constant factor, which we did not optimise.

\begin{theorem}\label{thm:NvsZupper2}
If $A \subset \N$ with $\ell_{\Z}(A)\leq n$ then $\ell_{\N}(A)\leq 3n + 2 n \log_2 n$.
\end{theorem}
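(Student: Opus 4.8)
The plan is to start from a set $B\subseteq\Z$ with $|B|\le n$ and $A\subseteq B+B$, and to build a natural‑number basis $B'$ by keeping the nonnegative part of $B$ and adjoining a small family $S$ of ``correction terms'' that simulate the sums of $A$ which originally used a negative element of $B$. Write $B=B^+\sqcup B^-$ with $B^+=B\cap\N$ and $B^-=B\setminus\N$, and set $\bar B^-=\{-b:b\in B^-\}\subseteq\Z_{>0}$. The first step is the elementary observation that, since every $a\in A$ is nonnegative, in any representation $a=b_1+b_2$ with $b_i\in B$ the two summands cannot both be negative; hence either $a\in B^++B^+$, or $a=p-m$ for some $p\in B^+$ and $m\in\bar B^-$ with $p\ge m$. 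Sums of the first kind are already handled once we keep $B^+\subseteq B'$, so the whole task reduces to covering the differences $p-m$.

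The key idea is to resolve each difference $p-m$ through a well‑chosen intermediate value. Let $V=B^+\cup\bar B^-$, a set of at most $n$ nonnegative integers, and fix a balanced binary tree $\mathcal T$ whose leaves are the elements of $V$ listed in increasing order; for each internal node $x$ let $v_x$ be the largest value occurring in the left subtree of $x$, so that $\mathcal T$ is a search tree. Then I would set
\[
S \;=\; \{0\}\ \cup\ \bigcup_{x\text{ internal}}\Bigl(\{\,p-v_x:\ p\in B^+\text{ lies in the right subtree of }x\,\}\ \cup\ \{\,v_x-m:\ m\in\bar B^-\text{ lies in the left subtree of }x\,\}\Bigr),
\]
and $B'=B^+\cup S$. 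Every element of $S$ is a natural number, since $p$ in the right subtree of $x$ gives $p>v_x$ and $m$ in the left subtree of $x$ gives $m\le v_x$. To check $A\subseteq B'+B'$, take $a\in A$; if $a\in B^++B^+$ we are done, and otherwise $a=p-m$ as above. If $p=m$ then $a=0=0+0$; if $p>m$, let $x$ be the lowest common ancestor in $\mathcal T$ of the leaves $p$ and $m$, so $p$ and $m$ lie in different child subtrees of $x$, and the inequality $p>m$ forces $m$ into the left subtree and $p$ into the right subtree. Hence $p-v_x\in S$ and $v_x-m\in S$, and $a=(p-v_x)+(v_x-m)\in S+S$.

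For the size bound, note $|B'|\le|B^+|+|S|$, and a leaf of $\mathcal T$ has at most $\lceil\log_2|V|\rceil\le\lceil\log_2 n\rceil$ ancestors, so it is responsible for at most that many terms in the union defining $S$; summing over the leaves gives $|S|\le 1+(|B^+|+|\bar B^-|)\lceil\log_2 n\rceil=1+|B|\lceil\log_2 n\rceil$, whence $|B'|\le n+1+n\lceil\log_2 n\rceil\le 3n+2n\log_2 n$. The trivial degenerate cases (where $B^+$ or $\bar B^-$ is empty, or $|V|\le 1$) should be dispatched separately.

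I expect no serious obstacle here: the content is in identifying the right gadget, namely decomposing each difference as $p-m=(p-v_x)+(v_x-m)$ using a binary‑tree pivot on the set of \emph{magnitudes} $B^+\cup\bar B^-$, rather than on the binary digits of the numbers themselves (which would cost a factor $\log_2(\max A)$ instead of $\log_2 n$, and is fatal since the elements of $B$ may be arbitrarily large). The only points that need care are the sign bookkeeping guaranteeing $S\subseteq\N$, the verification that each $a=p-m$ is genuinely caught at the lowest common ancestor of its two magnitudes, and the counting of how many correction terms each element of $B$ contributes — all routine once the construction is in place.
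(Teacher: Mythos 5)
Your proof is correct and is essentially the paper's argument in different clothing: the paper also reduces to differences $x_r-x_t$ of the sorted magnitudes and resolves each one through a pivot $x_s$ chosen by dyadic rounding of the indices, which is exactly your lowest-common-ancestor pivot in a balanced search tree on $V$, yielding the same $O(n)$ elements per each of the $O(\log n)$ levels. The only (immaterial) difference is that you restrict the correction terms by sign, whereas the paper includes both rounded-up and rounded-down differences for every index at every scale.
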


\begin{theorem}\label{thm:NvsZlower2}
For all $n \ge 1$ there is $A \subset \N$ with $\ell_{\Z}(A)\leq 2n$ 
and  $\ell_{\N}(A)\geq \tfrac{1}{25} n \log_2 n - \tfrac{1}{5}n$.
\end{theorem}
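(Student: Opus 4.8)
The plan is to take $A$ to be an arithmetic encoding of the family of all subintervals of $\{0,1,\dots,N\}$ with $N=\Theta(n)$. Fixing a base $b\ge 3$, set
\[
A\eqdef\{\,b^{v}-b^{u}:0\le u<v\le N\,\},
\]
so that $b^{v}-b^{u}$ has base-$b$ digits equal to $b-1$ on exactly the positions $u,u+1,\dots,v-1$, and thus records the interval $[u,v)$. The upper bound over $\Z$ is then immediate: $B=\{\pm b^{i}:0\le i\le N\}$ has $|B|\le 2(N+1)$ and $b^{v}-b^{u}\in B+B$, so taking $N=n-1$ gives $\ell_{\Z}(A)\le 2n$. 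All the content is in showing $\ell_{\N}(A)\ge\Omega(n\log n)$.

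The combinatorial heart is a short lemma about interval covers. Call a family $\mathcal F$ of subintervals of $\{0,\dots,N\}$ \emph{splitting} if every subinterval $[u,v)$ with $v-u\ge 2$ can be written as a concatenation $[u,w)\sqcup[w,v)$ of two members of $\mathcal F$. Then $|\mathcal F|\ge\tfrac12 N\log_2 N-O(N)$. To see this, for each scale $1\le d\le\lfloor\log_2 N\rfloor$ and each $u$ with $u+2^{d}\le N$, fix a splitting of $[u,u+2^{d})$ and let $\phi(u,d)\in\mathcal F$ be the longer of its two pieces; this is an interval of length in $[2^{d-1},2^{d})$ that shares an endpoint with $[u,u+2^{d})$. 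Given $J\in\mathcal F$, its length already pins down the scale $d$ for which $\phi(u,d)$ could equal $J$, and then the shared endpoint pins down $u$; hence $\phi$ is at most $2$-to-$1$. As there are $\sum_{d=1}^{\lfloor\log_2 N\rfloor}(N-2^{d}+1)\ge N\log_2 N-O(N)$ pairs $(u,d)$, the bound follows. With $N=\Theta(n)$ this comfortably exceeds $\tfrac1{25}n\log_2 n-\tfrac15 n$, so a constant-factor loss in the reduction below is affordable (and is the source of the explicit constants).

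It remains to reduce $\ell_{\N}(A)\ge\Omega(n\log n)$ to the splitting lemma, and this is the step I expect to be the main obstacle. One wants: any $B'\subseteq\N$ with $A\subseteq B'+B'$ yields a splitting family of size $O(|B'|)$, obtained by reading off from each $x\in B'$ an interval (its base-$b$ support, or its ``top block''). Several structural facts come for free when $b\ge 3$: in any identity $b_1+b_2=b^{v}-b^{u}$ with $b_1\ge b_2$, the leading base-$b$ digit of $b_1$ lies at position $v-1$, and at every position $p\in[u,v)$ at least one of $b_1,b_2$ is nonzero, so $[u,v)\subseteq\operatorname{supp}(b_1)\cup\operatorname{supp}(b_2)$. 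The obstruction is that this sum need not be carry-free: $b_1,b_2$ need not be honest blocks, their supports can be inflated by low-order junk, and in particular the scale $d=\log_2(v-u)$ is not recoverable from $b_1$ alone, so the endpoint/length bookkeeping driving the splitting lemma breaks down. The key missing ingredient is therefore a carry-control lemma showing that, for $b$ large, carries in such identities are confined to low positions — equivalently, that an optimal $\N$-basis may be assumed to realize each block essentially cleanly — at the cost of only a constant factor; granting this, recursively stripping the clean top part of $b_1$ (and absorbing the $O(N)$ cost of the length-one blocks $b^{u+1}-b^{u}$) exhibits the desired splitting family, and together with the splitting lemma and $N=\Theta(n)$ this yields $\ell_{\N}(A)\ge\tfrac1{25}n\log_2 n-\tfrac15 n$. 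A cleaner alternative, if it can be engineered, is to choose the encoding so that no carries are possible at all, reducing the lower bound to a direct quotation of the splitting lemma.
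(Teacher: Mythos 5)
Your construction of $A$ is essentially the paper's: the paper takes $x_r=4^r$ and $A=\N\cap(C+C)$ with $C=\{\pm x_r\}$, which is your base-$b$ encoding with $b=4$, and the $\Z$-upper bound is the same. Your splitting-family lemma also captures the correct intuition (roughly $n$ elements are needed at each of $\log_2 n$ dyadic scales of interval lengths), and its proof is fine as a statement about families of intervals. But the reduction from an arbitrary $B'\subseteq\N$ with $A\subseteq B'+B'$ to a splitting family is the entire content of the theorem, and it is missing; you flag it yourself as ``the key missing ingredient,'' so this is a genuine gap rather than an oversight.

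The gap is not really about carries, and I do not believe a ``carry-control lemma'' of the kind you describe is the right fix: the adversary chooses $B'$, so no choice of encoding can force the summands to be clean blocks (e.g.\ $b_1=b^v-b^u-c$ and $b_2=c$ is always available). The paper instead accepts that elements of $B'$ are arbitrary and classifies them only by which window $[x_r-x_s,\,x_r-x_{s-1})$ they fall into; the growth $x_r=4^r$ then forces, in any representation $x_r-x_t=b+b'$ with $b\ge b'$, that $b$ lies in such a window $B_{r,s}$ and that either $b'$ has magnitude comparable to $x_s$ or $b'$ is so small that $x_r-x_{s-1}$ is the \emph{only} element of $A$ representable as $b+b''$ with $b''$ that small (a ``unique'' representation, handled by a separate injection into $B'$). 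Two further phenomena defeat a direct injection into a splitting family. First, the bottom summand $b'$ only lands in $B_s\cup B_{s-1}$, so the interval you would read off from it is $[t',s)$ or $[t',s-1)$ with $t'$ not necessarily equal to $t$; the exact concatenation $[t,r)=[t,s)\sqcup[s,r)$ your lemma needs is not literally available. Second, and more fundamentally, a single element of $B'$ sitting in $B_{r,s}$ can serve as the top summand for \emph{many} differences $x_r-x_t$ (all $t\le s$), so the cost of a scale cannot be charged injectively to basis elements; the paper's workaround is the averaged double count via the degrees $d_s$, the dyadic classes $X_j$ of those degrees, and the trichotomy of representations into unique/up/down according to how the scale of $r-t$ compares with the degree class of $s$. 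That averaging argument is what replaces your reduction, and without it (or a genuinely new substitute) the proof does not close.
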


For larger $k$ our results are not as precise. 
From \Cref{thm:NvsZupper2} it follows that
if $\ell^{(k)}_{\Z}(A)\leq n$ then $\ell^{(k)}_{\N}(A)\leq O( n^{\lceil \frac{k}{2}\rceil} \log n)$; indeed, if $B\subset\mathbb{Z}$ is a $k$-basis, then $\lceil \frac{k}{2}\rceil B$ is a $2$-basis.
In the following theorem we give a cubic construction,
thus showing that the power of $n$ does not need to increase with $k$.

\begin{theorem}\label{iteratedNvsZthm}
If $A \subset \N$ with $\ell_{\Z}(A)\leq n$ 
then $\ell^{(k)}_{\N}(A)\leq 16k\log(k) n^3$.
\end{theorem}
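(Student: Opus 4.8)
The plan is to start from a representation of each $a\in A$ as a sum of $k$ elements of the integer $k$‑basis and to rewrite it as a sum of $k$ nonnegative integers drawn from a set $B'$ whose size we can control. First, from the hypothesis we get $B\subseteq\Z$ with $A\subseteq 2B$ and $|B|\le n$; replacing $B$ by $B\cup\{0\}$ and padding with zeros gives $A\subseteq kB$ with $|B|\le n+1$, and abusing notation we keep writing $n$ for $|B|$. We may assume $B$ has both a positive element $b_+:=\max B>0$ and a negative one, since otherwise either $B\subseteq\N$ (take $B'=B$) or $kB\cap\N\subseteq\{0\}$ (take $B'=\{0\}$). For $a\in A$ fix integers $c_i\ge0$ with $\sum_i c_i=k$ and $a=\sum_i c_i b_i$; note $c_i\le k$.

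The second step is a binary splitting of the multiplicities. Writing $c_i=\sum_t\epsilon_{i,t}2^t$ with $\epsilon_{i,t}\in\{0,1\}$ and $0\le t\le\lceil\log_2 k\rceil$, we get $a=\sum_{i,t}\epsilon_{i,t}(2^t b_i)$, which is a sum of at most $\sum_i\#\{t:\epsilon_{i,t}=1\}\le\sum_i c_i=k$ ``blocks'' of the form $2^t b_i$, with at most $n(\lceil\log_2 k\rceil+1)$ distinct blocks overall; this is where the $\log k$ enters. The nonnegative blocks are placed into $B'$ directly. To handle a negative block $-2^t|b_i|$, I would pair it with just enough copies of a single positive value $b_j$: the group consisting of this block together with $\lceil 2^t|b_i|/b_j\rceil$ copies of $b_j$ has value $\lceil 2^t|b_i|/b_j\rceil b_j-2^t|b_i|\in[0,b_j)$, a quantity depending only on the triple $(i,j,t)$. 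Keeping the leftover positive blocks as singletons and padding with zeros, one would take $B'$ to be $\{0\}$ together with the nonnegative blocks and all such ``correction values'', so that each $a$ becomes a sum of exactly $k$ elements of $B'$; counting triples $(i,j,t)$ bounds $|B'|$ by roughly $n^2\log k$, comfortably inside the stated bound — provided the grouping can actually be carried out.

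The main obstacle is precisely this realizability, which is a transportation/bin‑packing question: the negative blocks have total demand $\sum_{b_i<0}c_i|b_i|$ and the positive slots have total capacity $\sum_{b_i>0}c_i b_i$, and since $a\ge0$ the capacity dominates the demand, so a fractional assignment exists; but rounding each count $\lceil 2^t|b_i|/b_j\rceil$ up wastes up to one copy of $b_j$ per group, so the number of summands actually used can exceed $k$ by as much as the number of groups, $O(n\log k)$. Repairing this — by allowing a bounded number of blocks per group, or splitting a block across two positive values, or treating separately the regime where $a$ is small relative to $b_+$ (where the slot budget is tight, and where, if $B$ has a small gap, the relevant $kB$‑elements form a long near‑interval that must be covered by $O(\text{length}/k)$ further basis elements) — is what inflates the bound to $16k\log(k)n^3$, which the authors remark is not optimised. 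An equivalent and perhaps cleaner framing is to replace $A$ by the generalized arithmetic progression $\{kb_+-\sum_{j=1}^{n-1}g_j C_j:\ 0\le C_j\le k\}\supseteq kB$, where $g_1,\dots,g_{n-1}$ are the gaps of $B$, and to build a $k$‑basis over $\N$ for this GAP by binary‑splitting the $C_j$; the same obstruction reappears as a slot count when the GAP‑element is small, and one expects its bounded dimension (with each ``small‑step direction'' contributing only an interval) to be what keeps the count polynomial in $n$.
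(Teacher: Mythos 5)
Your proposal has a genuine gap, and to your credit you name it yourself: the binary-splitting-plus-correction scheme is not shown to be realizable. Two things go wrong. First, the counting: each negative block $-2^t\lvert b_i\rvert$ must be absorbed by copies of positive elements that are actually present in the chosen representation $a=\sum_i c_i b_i$, and rounding $\lceil 2^t\lvert b_i\rvert/b_j\rceil$ upward can overdraw the positive budget by up to $b_j$ per group; when $a$ is small compared to $\max B$ the total capacity $\sum_{b_i>0}c_ib_i$ exceeds the demand $\sum_{b_i<0}c_i\lvert b_i\rvert$ by only $a$, so the slack can be far less than the waste, and no assignment of whole copies exists. Second, your proposed repair for exactly this regime --- that the small elements of $kB\cap\N$ ``form a long near-interval'' coverable by $O(\text{length}/k)$ further elements --- is false for general $B$: the small sums of an arbitrary integer set have no such interval structure, and controlling them is precisely the hard part of the theorem. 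Everything you do carry out (padding a $2$-basis to a $k$-basis, binary splitting of multiplicities, which contributes the $\log k$) is the easy part; the claimed bound of roughly $n^2\log k$ for the full problem should itself have been a warning, since it would beat the paper's $n^3$ without addressing the hard case.

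The paper's proof supplies exactly the missing idea, and it is quite different from yours. It works over the reals and splits $k(B\cup-B)\cap\R_{\ge0}$ by scale: a Diophantine approximation lemma produces $L$ and an arithmetic progression so that $x_i=y_iL+z_i$ with all errors $\lvert z_i\rvert\le L/C$ and $z_1=z_n$. Elements of size at least $L/2$ are covered by a dyadic rounding construction costing $O(nk\log(k/L))=O(n^2k\log k)$ elements (this is the analogue of your binary splitting, but applied to dyadic truncations of the $x_i$ with a correction term $D_{m_0}$ chosen so the summands are nonnegative and sum exactly right). Elements smaller than $L/2$ are shown to lie in $k(B'\cup-B')$ where $B'$ is the set of error terms $z_i$, which has at most $n-1$ distinct absolute values because $z_1=z_n$; one then recurses, and the $n$ levels of recursion times $O(n^2k\log k)$ per level give the $O(n^3k\log k)$ bound. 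If you want to salvage your approach, you would need a substitute for this separation-of-scales step; as written, the proposal does not prove the theorem.
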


\paragraph{Ideas and techniques.}
Somewhat surprisingly, our lower bound in \Cref{thm:QvsZlowerbound} comes from linear algebra.
The key idea is to reduce the problem to a related question in a \emph{vector model}.
For example, when $k=2$ the problem in the vector model is to find $B_0,B_1 \subset \Q^n$
minimising $|B_0|+|B_1|$ subject to $B_0+B_1$ containing all vectors $e_i+e_j$ with $1 \le i \le j \le n$,
where $\{e_1,\dots,e_n\}$ is the standard basis of $\Q^n$.
If some $B_i$, say $B_0$, spans a subspace $V$ of codimension $d$
then we can find a subspace $W$ spanned by $d$ standard basis vectors $\{e_i: i \in I\}$
such that any translate of $V$ contains at most one point of $W$.
In particular, we need at least $\tbinom{d+1}{2}$ elements in $B_1$
to cover all $e_i+e_j$ with $i,j$ in $I$. Thus we can assume $d=O(\sqrt{n})$,
from which we deduce $|B_0|+|B_1| \ge 2n - O(\sqrt{n})$.

For bases in $\Z$ versus $\N$ our constructions use dyadic partitioning.
For example, when $k=2$ the problem reduces to finding a basis $B$ for sets
of the form $\{x_r-x_t: 1 \le t<r \le n\}$ for some $x_1 < \dots <x_n$.
Our construction is to take all $x_r-x_s$ where for some $j \le \log_2 n$
we have $s = 2^j \lfloor 2^{-j} r \rfloor$ or $r = 2^j \lceil 2^{-j} r \rceil$:\
it is not hard to see that any $x_r-x_t$ can be expressed 
using this set in the form $(x_r-x_s)+(x_s-x_t)$.
Thus we include in $B$ order $n$ elements at each of the $\log_2 n$ dyadic scales.
The intuition for the lower bound is that each scale requires order $n$ elements,
although this is not true literally, only in a rather subtle averaged sense
that requires some careful counting.

For higher order bases in $\Z$ versus $\N$, the key idea is to establish 
a `separation of scales' via a Diophantine approximation argument 
that finds an arithmetic progression $P$ approximating
some fixed $k$-basis $B$ over the integers.
For elements of the iterated sumset $kB$ at the scale of $P$
we can apply a variant of the dyadic partitioning construction
to find a moderately efficient $k$-basis over the natural numbers,
losing a factor of $n$ due to the exponential loss in Diophantine approximation.
We then reduce modulo the common difference of $P$,
which is chosen to reduce the number of elements from $n$ to $n-1$,
so that we can conclude by induction, losing another factor of $n$.

\paragraph{Organisation of the paper.}
We start in the next section by proving our results
comparing additive bases over the integers to those over the rationals.
In \Cref{sec:NZ2,sec:NZk} we compare additive $k$-bases
over the natural numbers to those over the integers,
firstly for $k=2$ and then for larger $k$.
We conclude with some further remarks and open problems.

\section{\texorpdfstring{$\Z$}{Z} versus \texorpdfstring{$\Q$}{Q}} \label{sec:ZQ}

Here we consider sets $A$ of integers that have $k$-bases of size $n$ over the rationals;
our task is to find small $k$-bases for $A$ over the integers. We start with the upper bound,
which is a straightforward rounding argument.

\begin{proof}[Proof of \Cref{thm:QvsZupperbound}]
Suppose $A\subset kB$ for some set $B\subset\Q$ with $|B|=n$. Let
\[
  C\eqdef\{\lfloor b\rfloor : b\in B\}\cup \{\lceil b\rceil : b\in B\}.
\]
We claim that $A\subset kC$; this will imply that $\ell_{\Z}^{(k)}(A)\leq 2n$.

Indeed, let $a\in A$ be arbitrary. Write it as $a=b_1+\dotsb+b_k$ for some $b_1,\dotsb,b_k\in B$.
Starting with $\lfloor b_1\rfloor+\dotsb+\lfloor b_k\rfloor$
change the floor signs to the ceilings, one term at a time.
Since $\lfloor b_1\rfloor+\dotsb+\lfloor b_k\rfloor \leq a$,
each change increases the sum by $0$ or $1$, and $\lceil b_1\rceil+\dotsb+\lceil b_k\rceil\geq a$,
one of the intermediate sums is actually equal to $a$. Hence $a\in kC$.
\end{proof}

\paragraph{Vector model.}

For the remainder of this section we work towards the proof of the lower bound.
As indicated in the introduction, the key idea will be to translate the problem to a vector model.
This will be achieved by \Cref{lem:vectorization} below.
We first require the following lemma on linear equations.
In the statement, we call a matrix $A$-by-$B$ if it has $A$ columns and $B$ rows.

\begin{lemma}\label{lem:matrices}
  Suppose $L_1,\dotsc,L_r$ are $A$-by-$B$ matrices, $M_1,\dotsc,M_r$ are $C$-by-$B$ matrices 
  and $t_1,\dotsc,t_r$ are vectors in $\Z^B$ such that
  for any $c\in \Z^C$ there exists an index $i\in [r]$ and vector $x\in \Z^A$ that solves the equation
  $L_ix=M_ic+t_i$. Then there exists an index $i$ such that the equation $L_{i}x=M_{i}c$ has a solution in $\Q^A$
  for every $c\in \Q^C$. 
\end{lemma}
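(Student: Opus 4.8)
The plan is to argue by contradiction: suppose that for every $i \in [r]$ there is some $c^{(i)} \in \Q^C$ such that $L_i x = M_i c^{(i)}$ has no rational solution $x$. By clearing denominators we may assume each $c^{(i)} \in \Z^C$, since $L_i x = M_i (\lambda c)$ is solvable iff $L_i x = M_i c$ is (scaling $x$). Now I want to glue the $c^{(i)}$ together into a single witness $c \in \Z^C$ that defeats all indices simultaneously, contradicting the hypothesis. The natural way to do this is to place the witnesses at very different scales: take $c = \sum_{i} N_i c^{(i)}$ for integers $N_i$ growing fast enough (a geometric-type sequence), so that for each $i$ the "dominant" term forces $M_i c$ to still lie outside the column space of $L_i$.

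The key step is making this scale-separation argument precise over $\Q$. Solvability of $L_i x = M_i c$ over $\Q$ is equivalent to $M_i c$ lying in the column space $\mathrm{Im}(L_i) \subseteq \Q^B$, a fixed linear subspace. So I want to choose $c$ so that $M_i c \notin \mathrm{Im}(L_i)$ for every $i$ at once. For a single $i$, the bad set $\{c : M_i c \in \mathrm{Im}(L_i)\}$ is a proper linear subspace $U_i \subsetneq \Q^C$ (proper because $c^{(i)} \notin U_i$). A finite union of proper subspaces of $\Q^C$ cannot cover $\Q^C$ (as $\Q$ is infinite), so there exists $c \in \Q^C \setminus \bigcup_i U_i$; scaling, we may take $c \in \Z^C$. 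This $c$ then has the property that for no $i$ is $L_i x = M_i c$ solvable over $\Q$ — hence, a fortiori, the equation $L_i x = M_i c + t_i$ has no solution over $\Z$ for any $i$ (an integer solution would in particular be a rational solution of $L_i x' = M_i c$ after subtracting a rational particular solution of $L_i x = t_i$; one must be slightly careful here, see below). This contradicts the hypothesis applied to this $c$.

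The main obstacle is handling the affine shift $t_i$ correctly: the hypothesis is about integer solvability of the \emph{inhomogeneous} system $L_i x = M_i c + t_i$, whereas the conclusion I want is about rational solvability of the \emph{homogeneous-in-$c$} system $L_i x = M_i c$. The clean way around this: if $L_i x = M_i c + t_i$ has an integer (hence rational) solution for our chosen $c$, I will derive that $M_i c$ lies in the \emph{affine} subspace $\mathrm{Im}(L_i) - t_i$. So the correct "bad set" to avoid is $U_i = \{c \in \Q^C : M_i c + t_i \in \mathrm{Im}(L_i)\}$, which is an affine subspace of $\Q^C$; it is proper since $c^{(i)} \notin U_i$ by construction (here we use that over $\Q$ the solvability of $L_i x = M_i c^{(i)} + t_i$ is what fails — but what we assumed fails is rational solvability of $L_i x = M_i c^{(i)}$; to line these up, pick $c^{(i)}$ to be a rational vector for which $L_i x = M_i c^{(i)}$ is unsolvable over $\Q$, equivalently $M_i c^{(i)} \notin \mathrm{Im}(L_i)$, which exists precisely under the negation of the desired conclusion, and note $M_i c^{(i)} + t_i \notin \mathrm{Im}(L_i)$ may fail — so instead I should work directly with the subspace $\{c : M_i c \in \mathrm{Im}(L_i)\}$ and observe that once $c$ avoids all of these, no integer solution to the shifted system can exist, because such a solution would give $M_i c + t_i \in \mathrm{Im}(L_i)$, and subtracting any rational vector $x_i^{t}$ with $L_i x_i^{t} = t_i$ — which need not exist). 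Because of this subtlety, the genuinely safe route is: avoid the proper affine varieties $\{c : M_i c + t_i \in \mathrm{Im}(L_i)\}$, each proper by the choice of a rational $c^{(i)}$ witnessing failure of the \emph{affine} rational system — and then observe that failure of rational solvability of $L_i x = M_i c$ for all $c$ implies (trivially, take $c = 0$) it also fails with the shift absorbed, giving the needed proper affine witness. I will organise the final write-up around this affine version to keep the bookkeeping transparent; everything else is the union-of-subspaces observation over the infinite field $\Q$.
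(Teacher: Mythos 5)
Your final ``affine'' version is essentially the paper's own argument read contrapositively. The paper defines $S_i\eqdef\{c\in \Q^C:\exists x\in\Q^A,\ L_ix=M_ic+t_i\}$ (your set $T_i=\{c: M_ic+t_i\in\mathrm{Im}(L_i)\}$), assumes all of these are proper affine subspaces of $\Q^C$, and derives a contradiction by counting: $|S_i\cap[N]^C|\le N^{C-1}$, so the union cannot be all of $\Z^C$ once $N>r$. Your two key observations --- that the rational solvability locus is an affine subspace, and that it is proper whenever the conclusion fails for that index (if $T_i=\Q^C$ then $c=0$ gives $t_i\in\mathrm{Im}(L_i)$, whence $M_ic=(M_ic+t_i)-t_i\in\mathrm{Im}(L_i)$ for every $c$) --- are exactly the content of the paper's proof, and your handling of the shift $t_i$ via the affine rather than linear bad sets is the right resolution of the difficulty you identified.

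The one step you have not actually secured is producing an \emph{integer} $c$ outside $\bigcup_i T_i$. The ``scale to clear denominators'' move is valid only for the linear sets $U_i$ of your first attempt: for a proper \emph{affine} subspace not through the origin, a rational point outside it can perfectly well be scaled into it (take $T_i=\{c:c_1=1\}$ and $c=(1/2,0,\dots,0)$). Since the hypothesis of the lemma quantifies only over $c\in\Z^C$, you need the statement that finitely many proper affine subspaces of $\Q^C$ cannot cover $\Z^C$, not merely that they cannot cover $\Q^C$. This is easy --- the paper's one-line count $|T_i\cap[N]^C|\le N^{C-1}$, obtained by projecting along a coordinate $e_j\notin T_i-T_i$, does it --- but it must be said, and ``the union-of-subspaces observation over the infinite field $\Q$'' as you state it does not deliver an integer point. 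With that line added the proof is complete; I would also suggest deleting the two retracted attempts from the write-up, since the reader currently has to excavate the correct argument from them.
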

In the conclusion of the lemma one cannot replace $\Q^A$ by $\Z^A$. For example, let $A=B=1$,  $C=2$, $t_1=t_2=t_3=0$,
$L_1x=L_2x=L_3x=2x$, $M_1(c_1,c_2)=c_1$, $M_2(c_1,c_2)=c_2$, $M_3(c_1,c_2)=c_1+c_2$.
Here for any $c=(c_1,c_2) \in \Z^2$ we can solve one of the three equations $2x=M_ic$, 
as at least one of $\{c_1,c_2,c_1+c_2\}$ is even, 
but none of these equations on its own has a solution for all $c\in\Z^2$.
\begin{proof}
For each $i\in [r]$ we consider the affine subspace
\[
  S_i\eqdef \{c\in \Q^C:\exists x\in\Q^A,\ L_ix=M_i c+t_i\}.
\]
Assume for the sake of contradiction that all $\dim_{\Q} S_i<C$. 
For any $N$ we have $\abs{S_i\cap [N]^C}\leq N^{C-1}$,
by projecting onto a co-ordinate hyperplane $e_j^\perp$ where $e_j \notin S_i-S_i$.
We deduce $\abs{\bigcup_{i\leq r} (S_i\cap [N]^C)}\leq rN^{C-1}$, which contradicts
the assumption that $\bigcup_i (S_i\cap \Z^C)=\Z^C$ if $N>r$.
\end{proof}

\begin{lemma}\label{lem:vectorization}
  Suppose $\ell_{\Z}^{(k)}(A)\leq m$ for every $A\subset \Z$ satisfying $\ell_{\Q}^{(k)}(A)\leq n$.
  Let $C=\{e_1,\dotsc,e_n\}$ be the standard basis  and $\allone=(1,\dotsc,1)\in \Q^n$.
  Then there exist sets $B_0,\dotsc,B_{k-1}\subset \Q^n$ satisfying
  \[ \abs{B_0}+\dotsb+\abs{B_{k-1}}=m  \ \text{ and } \
   kC\subset \bigcup \{ B_{i_1}+\dotsb+B_{i_k}+\Delta_i \allone: i_1 + \dots + i_k \equiv 1 \text{ mod } k \}, \]
 where $\Delta_i \eqdef \bigl\lfloor \frac{i_1+\dotsb+i_k}{k}\bigr\rfloor$ is
  an integer that depends only on $i=(i_1,\dotsc,i_k)$.
\end{lemma}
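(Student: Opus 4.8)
The plan is to feed the hypothesis a family of test sets indexed by $c\in\Z^n$, and then use \Cref{lem:matrices} to pass from an integer solution (which exists for every $c$) to a rational solution valid for all $c$ for a single combinatorial pattern, from which $B_0,\dots,B_{k-1}$ can be read off by specialising $c$. For a multiset $\sigma$ of size $k$ from $[n]$ write $v_\sigma\eqdef e_{\sigma_1}+\dots+e_{\sigma_k}\in\Z^n$, so that $kC=\{v_\sigma:\sigma\}$, and for $c\in\Z^n$ put
\[ A_c\eqdef\bigl\{\,1+k\langle c,v_\sigma\rangle\ :\ \sigma\ \text{a multiset of size }k\text{ from }[n]\,\bigr\}\subset\Z. \]
The shift by $1$ (and the factor $k$) do two things. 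First, $\{\,kc_i+\tfrac1k:i\in[n]\,\}$ is a rational $k$-basis of $A_c$, so $\ell_\Q^{(k)}(A_c)\le n$, and the hypothesis supplies an integer $k$-basis $B_c$ of $A_c$ with $|B_c|\le m$, which we pad to $\{b_1,\dots,b_m\}$ of size exactly $m$. Second, every element of $A_c$ is $\equiv 1\pmod k$, and this is precisely what will produce the constraint $i_1+\dots+i_k\equiv 1\pmod k$ in the conclusion.

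For each $c$, I would fix a witnessing representation $1+k\langle c,v_\sigma\rangle=b_{j_1(\sigma)}+\dots+b_{j_k(\sigma)}$ for every $\sigma$ and record the residues $r_j\eqdef b_j\bmod k\in\{0,\dots,k-1\}$; call the tuple $\bigl((j_\ell(\sigma))_{\sigma,\ell},(r_j)_j\bigr)$ the \emph{type} of the data. There are finitely many types, and I keep only those realised by some $c\in\Z^n$. For a realised type $\tau$ one has $\sum_\ell r_{j_\ell(\sigma)}\equiv\sum_\ell b_{j_\ell(\sigma)}\equiv 1\pmod k$ for every $\sigma$, and the relations $\sum_\ell b_{j_\ell(\sigma)}=1+k\langle c,v_\sigma\rangle$ form a linear system $L_\tau b=M_\tau c+\allone$ over $\Z$, where the $\sigma$-row of $L_\tau$ is the coordinate sum $\sum_\ell e_{j_\ell(\sigma)}^{\mathsf T}$ in $\Z^m$ and that of $M_\tau$ is $k\,v_\sigma^{\mathsf T}$. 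Since for every $c\in\Z^n$ the type realised at $c$ has the integer solution $b=B_c$, \Cref{lem:matrices} yields a realised type $\tau^\ast$ for which $L_{\tau^\ast}x=M_{\tau^\ast}c$ is solvable over $\Q$ for every $c\in\Q^n$.

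Choosing a $\Q$-linear section $c\mapsto x(c)$ of this system and writing $x(c)_j=\langle\beta_j,c\rangle$ with $\beta_j\in\Q^n$, comparison of the $\sigma$-rows (valid for all $c$) forces $\sum_\ell\beta_{j_\ell(\sigma)}=k\,v_\sigma$, so with $w_j\eqdef\tfrac1k\beta_j$ we get $\sum_\ell w_{j_\ell(\sigma)}=v_\sigma$ for every multiset $\sigma$. Setting $B_i\eqdef\{\,w_j:r^{\tau^\ast}_j=i\,\}$ (as multisets) for $i=0,\dots,k-1$ partitions $\{w_1,\dots,w_m\}$, so $|B_0|+\dots+|B_{k-1}|=m$; and for each $\sigma$, with $i_\ell\eqdef r^{\tau^\ast}_{j_\ell(\sigma)}$ we have $i_1+\dots+i_k\equiv 1\pmod k$ and $v_\sigma\in B_{i_1}+\dots+B_{i_k}$. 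This already proves the statement with every $\Delta_i$ replaced by $0$; to reach the stated form, replace each $B_i$ by $B_i+\tfrac{1-ki}{k^2}\allone$ (same cardinality) and note that for any tuple with $i_1+\dots+i_k\equiv 1\pmod k$ one has $\sum_\ell\tfrac{1-ki_\ell}{k^2}=-\Delta_i$, so the translations cancel and $v_\sigma$ still lies in $B_{i_1}+\dots+B_{i_k}+\Delta_i\allone$.

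The step needing care is the bookkeeping around \Cref{lem:matrices}: one must check that only finitely many types are "realised by some $c$", that \emph{every} $c\in\Z^n$ realises one of them — the latter using that $\ell_\Q^{(k)}(A_c)\le n$ holds even for degenerate $c$, such as $c=0$ with $A_c=\{1\}$ — and that padding each $B_c$ to size exactly $m$ (so that all $L_\tau$ share a shape) is harmless, the dummy coordinates contributing zero vectors that may be absorbed into $B_0$. The conceptual crux, however, has already been isolated in \Cref{lem:matrices}: the reason the vector model lives over $\Q$ rather than $\Z$ is precisely that passing from "$L_\tau x=M_\tau c+t_\tau$ solvable over $\Z$ for every $c$" to "some fixed $\tau$ solvable over $\Q$ for every $c$" genuinely loses integrality, exactly as the example following that lemma shows.
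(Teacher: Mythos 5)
Your proof is correct and follows essentially the same route as the paper: the same test family $A_c=kC+1$ with $C=kC'$, the same reduction to finitely many representation patterns handled by \Cref{lem:matrices}, and the same extraction of the vectors by solving the chosen system linearly in $c$ (your linear section $c\mapsto x(c)$ is exactly the paper's matrix of solutions at $c=e_1,\dots,e_n$). The only differences are organizational — you keep the raw basis elements and their residues mod $k$ and restore the shifts $\Delta_i\allone$ by a final translation, whereas the paper divides by $k$ up front so that the $\Delta_i$ appear directly — and both versions are sound.
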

\begin{proof}
There will be three steps in the proof:
(1) restricting attention to a special class of $k$-fold sumsets $A$ in $\Q$,
(2) setting up simultaneous equations that describe possible ways to represent $A$ as a $k$-fold sumsets in~$\Z$,
(3) applying \Cref{lem:matrices} to deduce that there is a uniform way to represent such sets $A$
and translating this into the conclusion of the lemma.
 
 \paragraph{1. Restricting to sets \texorpdfstring{$A$}{A} of a special form.} 
  Since $kC+1=k(C+\frac{1}{k})$, every set of the form $kC+1$ is a $k$-fold sumset in $\Q$.
  So, by the assumption of the lemma, for every $n$-element
  $C\subset \Z$ there exists a set $B\subset \Z$ of size $m$ such that
  $kC+1\subset kB$. We restrict to sets of the form $C = \{kx: x \in C'\}$
  where $C' \subset \Z$ with $|C'|=n$. Letting
  \begin{gather}
   B_i\eqdef\{b\in \Z: kb+i\in B\}\quad \text{for each }i=0,1,\dotsc,k-1,\nonumber \\
    \ \text{ we have} \ \abs{B_0}+\dotsb+\abs{B_{k-1}}=m, \ \text{ and } \label{eq:sumsizes} \\
    kC'\subset \bigcup_{i_1+\dotsb+i_k\equiv 1\pmod k} B_{i_1}+\dotsb+B_{i_k}+\Delta_i. \label{eq:bigunion}
  \end{gather}
     
  \paragraph{2. Setting up the equations.}
  Consider the problem of deciding if,
  given an $n$-element $C'$, there are sets $B_0,\dotsc,B_{k-1}$ satisfying \eqref{eq:sumsizes}
  and \eqref{eq:bigunion}. There are $\binom{m+k-1}{k-1}$ ways to choose the sizes of $B_0,\dotsc,B_{k-1}$
  to satisfy \eqref{eq:sumsizes}. Treating the elements of $B_0,\dotsb,B_{k-1}$ as unknowns, for each of these ways,
  there are a number of potential ways that elements of $kC'$ can be written as elements of the right hand size of \eqref{eq:bigunion}.
  For each of these ways, we obtain a system of equations of the form
  \[
    c_{j_1}+\dotsb+c_{j_k}=b_{i_1}+\dotsb+b_{i_k}+\Delta_i
  \]
  as $j_1,\dotsc,j_k$ range over $[n]$, and $c_1,\dotsc,c_n$ denote the elements of~$C'$.
  Solvability of such a system is equivalent to being able to satisfy \eqref{eq:sumsizes} and \eqref{eq:bigunion}
  in a particular way.

  \paragraph{3. Conclusion of the proof.}
  By the assumption, for any choice of $c_1,\dotsc,c_n\in\Z$, at least one of these linear systems has
  a solution over~$\Z$. From \Cref{lem:matrices} it then follows that one of these linear systems has
  a solution $(b_1,\dotsc,b_m)\in\Q^m$ for every $(c_1,\dotsc,c_n)\in \Q^n$.
  We denote this system by $Lx + \Delta =Mc$, where $L$ is $m$-by-$N$, $M$ is $n$-by-$N$
  and $\Delta \in \Z^N$, with $N = |kC'|$ being the number of equations. 
  We let $(S_0,\dots,S_{k-1})$ be the partition of $\{1,\dots,m\}$
  where the unknowns $(x_j: j \in S_i)$ correspond to the choice of $B_i$.
  
  To create the vector model, we will solve this system for $c=e \eqdef (e_1,\dots,e_n) \in (\Q^n)^n$,
  recalling that $\{e_1,\dotsc,e_n\}$ is the standard basis of $\Q^n$.
  To do so, we consider the $n$-by-$m$ matrix whose columns 
  are the solutions $b^{(i)}$ of $Lb^{(i)} + \Delta = Me_i$.
  We denote its rows by $b_j\eqdef (b_j^{(1)},\dotsc,b_j^{(n)})\in \Q^n$.
  Then $b \eqdef (b_1,\dots,b_m)$ solves $Lx + \Delta \allone =Me$.
  
  The required sets $B_0,\dotsc,B_{k-1}$ for the lemma are $B_i \eqdef \{b_j: j \in S_i\}$.
 \end{proof}

\paragraph{Linear algebra.}

We are now nearly ready to prove the main result of this section.
As discussed in the introduction, the vector model established above
will allow us to bring into play methods from linear algebra.
To do so, we employ the following lemma, in which we call a linear space
a \emph{coordinate subspace} if it is spanned by some set of standard basis vectors.

\begin{lemma}\label{lem:coord}
  Let $F$ be a field and $V\subset F^n$ a vector space of codimension $d$. 
  Then there exists a coordinate subspace $W$ of dimension $d$ such that 
  $|W \cap (V+t)| \le 1$ for every $t\in F^n$.
  \end{lemma}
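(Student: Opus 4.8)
The plan is to find a set $I \subseteq [n]$ of size $d$ such that the coordinate subspace $W = \mathrm{span}\{e_i : i \in I\}$ meets every translate of $V$ in at most one point. The condition ``$|W \cap (V+t)| \le 1$ for all $t$'' is equivalent to ``$W \cap V = \{0\}$'', i.e.\ $W$ and $V$ intersect trivially: if $w_1, w_2 \in W \cap (V+t)$ then $w_1 - w_2 \in W \cap V$, and conversely. So the task reduces to the purely linear-algebraic statement: given $V$ of codimension $d$, there is a set of $d$ standard basis vectors that is linearly independent modulo $V$, equivalently whose image in the quotient $F^n / V$ is a basis.

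First I would pass to the quotient map $\pi : F^n \to F^n/V$, a surjection onto a $d$-dimensional space. The images $\pi(e_1), \dots, \pi(e_n)$ span $F^n/V$ (since the $e_i$ span $F^n$). Hence one can extract from them a basis: there is a set $I \subseteq [n]$ with $|I| = d$ such that $\{\pi(e_i) : i \in I\}$ is a basis of $F^n/V$. This is the standard fact that a spanning set contains a basis, applied in the quotient. Then $W = \mathrm{span}\{e_i : i \in I\}$ has dimension $d$ (the $e_i$ with $i \in I$ are independent because their images already are), and $\pi|_W : W \to F^n/V$ is an isomorphism, so $W \cap V = \ker(\pi|_W) = \{0\}$. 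Finally I would record the translate reformulation: for any $t$, if $x, y \in W \cap (V+t)$ then $x - y \in W \cap V = \{0\}$, so $x = y$, giving $|W \cap (V+t)| \le 1$.

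There is essentially no hard step here; the only thing to be careful about is the equivalence between the translate-intersection condition and $W \cap V = \{0\}$, and the fact that extracting a basis from a spanning set works over an arbitrary field $F$ (it does — no finiteness or characteristic assumption is needed, only that $F^n/V$ is finite-dimensional, which holds since it has dimension $d$). I would state the argument in this order: (1) reduce the conclusion to $W \cap V = \{0\}$; (2) use the quotient $\pi$ and spanning-to-basis to produce $I$; (3) verify $\dim W = d$ and $W \cap V = \{0\}$ from $\pi|_W$ being injective.
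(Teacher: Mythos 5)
Your proof is correct and follows essentially the same approach as the paper: both reduce the translate condition to $W \cap V = \{0\}$ via the difference of two points, and both locate $d$ standard basis vectors forming a complement to $V$. The paper extracts these coordinates by putting the defining equations of $V$ in row echelon form, while you extract a basis of $F^n/V$ from the images of the $e_i$; these are the same linear-algebra fact in two standard guises.
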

\begin{proof}
  The subspace $V$ is a zero set of a system of $d$ linear equations. 
  By putting the equations in a row echelon form and permuting the coordinates, 
  we may assume that the $i$'th linear equation is of the form
  $x_i+c_{i,i+1}x_{i+1}+\dotsb+c_{i,n}x_n=0$. 
  Then the subspace $W$ spanned by $e_1,\dotsc,e_d$ has the required property.
  Indeed, if we had two distinct vectors in $W \cap (V+t)$ for some $t \in F^n$ 
  then their difference would belong to $W \cap V$, but clearly $W \cap V = \{0\}$. 
\end{proof}

We conclude this section by proving its main result.

\begin{proof}[Proof of \Cref{thm:QvsZlowerbound}]
We will show that there exists a set $A\subset \Z$ satisfying
$\ell_{\Q}^{(k)}(A)\leq n$ and $\ell_{\Z}^{(k)}(A)\geq m\eqdef 2n-k^4n^{1-1/k}$.
We assume for contradiction that no such $A$ exists, thus obtaining the vector model
$B_0,B_1,\dotsc,B_{k-1}\subset \Q^n$ from  \Cref{lem:vectorization}.
For each $i=0,1,\dotsc,k-1$, let $V_i\subset \Q^m$ be the span of $B_i$. By \Cref{lem:coord} there is a coordinate subspace
$W_i\subset \Q^n$ of dimension at least $n-\abs{B_i}$ such that $\abs{V_i\cap (W_i+t)}\leq 1$ for all $t\in \Q^n$.
By shrinking $W_i$, we may assume that $\dim W_i=\max\{n-\abs{B_i},0\}$.

Let $C_i$ be the set of standard basis vectors that spans~$W_i$. 
Note that $\sum_{i=0}^{k-1} \abs{C_i}=\sum_{i=0}^{k-1} \dim W_i \ge kn-m$.
Let $C'$ be the set of standard basis vectors that are contained in at least $k-1$ of the sets $C_0,\dotsc,C_{k-1}$.
From $kn-m \le \sum_{i=0}^{k-1} \abs{C_i}\leq k\abs{C'}+(k-2)n$, and our choice of $m$, it follows
that $\abs{C'}\geq k^3 n^{1-1/k}$. Hence, there exists an index $i_0\in \{0,1,\dotsc,k-1\}$ such that
\[
  \abs{\bigcap_{i\neq i_0} C_i}\geq k^2n^{1-1/k}.
\]
Let $C''\eqdef \bigcap_{i\neq i_0} C_i$.
The $k$-fold sumset $kC''$ is of size
\begin{equation}\label{eq:Cprime_lowerbound}
  \abs{kC''}=\binom{k^2n^{1-1/k}+k-1}{k}\geq k^{2k} n^{k-1}/k!.
\end{equation}

Now consider $B\eqdef \bigcup_{i=0}^{k-1} B_i$ and 
write $D\eqdef \{\Delta_i \allone : i_1+\dotsb+i_k\equiv 1\pmod k\}$.
Noting that in every solution to $i_1+\dotsb+i_k\equiv 1\pmod k$
at least one of $i_1,\dotsc,i_k$ is not equal to $i_0$, we obtain
\begin{align*}
  kC'' \subset kC
   &  \subset \bigcup_{i_1+\dotsb+i_k\equiv 1\pmod k} B_{i_1}+\dotsb+B_{i_k}+\Delta_i \allone\\
    &\subset \bigcup_{i\neq i_0} B_i+(k-1)B+D.
\end{align*}
Now recall that each standard basis vector in $C''$ belongs to each $W_i$ with $i \ne i_0$.
Since $B_i\subset V_i$ and $\abs{W_i\cap (V_i+t)}\leq 1$ for every $t\in \Q^n$, it follows that
\begin{align*}
  \abs{kC''}&\leq \sum_{i\neq i_0}\abs{kC''\cap \bigl(B_i+(k-1)B+D\bigr)}
           \leq \sum_{i\neq i_0}\abs{(k-1)B+D}\\
           &\leq (k-1)\binom{m+k-2}{k-1}k^{k-1}
            \leq (k-1)\frac{(2n)^{k-1}}{(k-1)!}k^{k-1}.
\end{align*}
As this inequality contradicts \eqref{eq:Cprime_lowerbound}, we conclude that there does exist a set
$A$ satisfying $\ell_{\Q}^{(k)}(A)\leq n$ and $\ell_{\Z}^{(k)}(A)\geq 2n-k^4n^{1-1/k}$.
\end{proof}
  
\section{\texorpdfstring{$\N$}{N} versus \texorpdfstring{$\Z$}{Z}: \texorpdfstring{$2$-bases}{2-bases}}  \label{sec:NZ2}

Here we consider sets $A$ of natural numbers 
that have $2$-bases of size $n$ over the integers;
our task is to find small $2$-bases for $A$ over the natural numbers.
We start with the upper bound, implementing the dyadic construction
mentioned in the introduction.

\begin{proof}[Proof of \Cref{thm:NvsZupper2}]
By assumption, we have $A \subseteq B+B$ for some $B \subset \Z$ with $|B| \le n$.

We fix $x_1 < x_2 < \dots < x_n$ in $\N$ so that each element of $B$ 
is $\pm x_i$ for some $i$ and let 
\[ B^+ \eqdef (B \cap \N) \cup \bigcup_{j=0}^{\lfloor \log_2 n \rfloor} B_j,\]
where each $B_j$ contains all elements of the form $x_r - x_s$
where $s = 2^j \lfloor 2^{-j} r \rfloor$ or $r = 2^j \lceil 2^{-j} s \rceil$.

We claim that $A \subseteq B^+ + B^+$. To see this, consider $a \in A$
and write $a=b+b'$ with $b,b' \in B$. If both $b,b' \in \N$ then
we have the same representation $a=b+b'$ in $B^+ + B^+$.
The other case is that $a = x_r - x_t$ for some $1 \le t < r \le n$.
We write $r$ and $t$ in binary and let $2^j$ be the 
most significant column in which they differ.
Then $s \eqdef 2^j \lfloor 2^{-j} r \rfloor = 2^j \lceil 2^{-j} t \rceil$.
Now $a = x_r - x_t = (x_r - x_s) + (x_s - x_t)$ 
represents $a$ in $B^+ + B^+$, as required.
Clearly $|B^+| \le n + 2 n (1+\log_2 n)$.
\end{proof}

Now we prove the lower bound, where roughly speaking we find that one needs
order $n$ elements at each dyadic scale, in a suitably averaged sense.

\begin{proof}[Proof of \Cref{thm:NvsZlower2}]
We fix $0 < x_1 < x_2 < \dots < x_n$ in $\N$ rapidly growing, say each $x_r = 4^r$,
and let $A = \N \cap (C+C)$, where $C$ consists of all $\pm x_r$ with $1 \le r \le n$.
Then $C$ witnesses  $\ell_{\Z}(A)\leq 2n$. Consider any $B \subset \N$ with $A \subseteq B+B$.
We need a lower bound on $|B|$. Our bound will only use the fact that $B+B$ contains 
$A' \eqdef \{ x_r-x_t: 1 \le t < r \le n \}$, so we can discard from $B$ any elements~$\ge x_n$.

We partition $B$ into sets $B_r = B \cap [x_{r-1},x_r)$ for $1 \le r \le n$, where $x_0\eqdef 0$.
We further partition each $B_r$ into sets $B_{r,s} = B_r \cap [x_r-x_s,x_r-x_{s-1})$ for $1 \le s \le r$.
The key observations regarding this partition are that
if $a = x_r-x_t = b+b'$ for some $1 \le t < r \le n$ and $b,b' \in B$ with $b \ge b'$ then 
\begin{enumerate}
\item for some $1 \le s \le r$ we have $b \in B_{r,s}$ and $b'<x_s$, and 
\item either $b' \in B_s \cup B_{s-1}$ or $a = x_r-x_{s-1}$
is the only element of $A'$ of the form $b+b''$ with $b'' \in B$ and $b'' < x_{s-2}$,
in which case we call $a$ `unique'.
\end{enumerate}
Indeed, we must have $b \in B_r$, as if $b \ge x_r$ then $b+b'>x_r-x_t$ is too large
and if $b<x_{r-1}$ then $b+b' < 2x_{r-1} < x_r-x_t$ is too small.
Then $b \in B_{r,s}$ for some $1 \le s \le r$,
and $b' < x_s$, as otherwise $b+b' \ge x_r$ is too large.
Furthermore, if $b' < x_{s-2}$ then $x_r - x_s < b+b' < x_r-x_{s-1} + x_{s-2} < x_r - x_{s-2}$,
so $b+b' = x_r - x_{s-1}$.

For $1 \le s \le n$ we define degrees $d_s \eqdef |B_s| + |B_{s-1}| + \sum_{r:r\geq s} |B_{r,s}|$,
noting that $\sum_s d_s \le 3|B|$. 

For each $0 \le j \le \log_2 n$ we let $X_j \eqdef \{ x_s: d_s \in [2^j, 2^{j+1}) \}$,
so $\sum_j 2^j |X_j| \le \sum_s d_s \le 3|B|$.

For each $0 \le i \le \log_2 n$ we let $A_i \eqdef \{ x_r-x_t: r-t \in [2^i, 2^{i+1}) \}$.

For each $a = x_r-x_t \in A'$ we fix some representation $a = b_a+b'_a$  with $b_a\geq b'_a$ in $B+B$
and define the scale $i$ of $a$ 
by $a \in A_i$ and representation scale $j$ of $b_a$ by $x_s \in X_j$, where $b_a \in B_{r,s}$.
We note that $t \le s \le r$.

If $a$ is not unique, we call $a$ `up' if $j>i$ or `down' if $j \le i$. 
We define $n^!_i, n^+_i, n^-_i \in \Q$ so that $A_i$ 
has $n^!_i 2^i$ unique, $n^+_i 2^i$ up and $n^-_i 2^i$ down elements,
noting that $n^!_i + n^+_i + n^-_i = 2^{-i} |A_i| \ge n - 2^{i+1}$.

Clearly $\sum_i n^!_i \le \sum_i n^!_i 2^i \le |B|$,
as $a\mapsto b_a$ is an injective map from unique elements to~$B$.

To estimate the number of up elements, we fix any $j>i$ and any $x_s \in X_j$,
then note that there are $\le 2^{i+1}$ choices of $r \in [s,t+2^{i+1})\subset [s,s+2^{i+1})$ 
and then $\le 2^{i+1}$ choices of $t \in (r-2^{i+1},r]$. This gives 
\[ n^+_i 2^i \le \sum_{j: j>i} |X_j| \cdot  2^{i+1} \cdot 2^{i+1}. \]
Summing over $i$, we deduce
$\sum_i n^+_i \le 4 \sum_j 2^j |X_j| \le 12|B|$.

For down elements, for any $j \le i$ let $N_{i,j}$ count 
elements $a$ of scale $i$ with representation scale $j$, i.e. $N_{i,j}=|\{a\in A_i: b_a\in B_{r,s} \text{ and } x_s\in X_j\}|$.
Thus $n^-_i 2^i = \sum_{j: j \le i} N_{i,j}$. On the other hand,
by definition of degree and $X_j$ we have
\[ \sum_{i: i \ge j} N_{i,j} \le \sum_{x_s\in X_s} \abs{B_s\cup B_{s-1}}\cdot \abs[\bigg]{\bigcup_{r:r\geq s} B_{r,s}} \le |X_j| \cdot 2^{j+1} \cdot 2^{j+1}. \]
Summing over $j$, we deduce
$\sum_i n^-_i \le \sum_{j \le i} 2^{-i} N_{i,j} \le 4 \sum_j 2^j |X_j| \le 12|B|$.

To conclude, $25|B| \ge \sum_i (n^!_i + n^+_i + n^-_i) \ge \sum_i (n - 2^{i+1}) \ge n\log_2 n - 4n$.
\end{proof}

\section{\texorpdfstring{$\N$}{N} versus \texorpdfstring{$\Z$}{Z}: higher order bases}  \label{sec:NZk}

Our task here is to consider a set $A$ of natural numbers 
with a $k$-basis of size $n$ over the integers
and a small $k$-basis for $A$ over the natural numbers.
In fact, it will be more convenient to consider a set $A$ of non-negative reals
with a $k$-basis of size $n$ over the reals
and find a small $k$-basis $B$ over the set of non-negative reals.
This suffices, as if we thus obtain $B$ from some $A \subset \N$
then taking $\lfloor b \rfloor$ and $\lceil b \rceil$ for each $b \in B$
gives a $k$-basis of $A$ over $\N$, by the same rounding argument
as in the proof of \Cref{thm:QvsZupperbound}.
\Cref{iteratedNvsZthm} therefore follows from the following theorem.

\begin{theorem}\label{RvsR+}
  If $B \subset \R_{\geq 0}$ with $|B|=n$ then we can find a set $X\subset\R_{\geq 0}$ with $|X|\leq 2n^3k\log(k)$
  such that $\left(k (B\cup -B)\right)\cap \R_{\geq0}\subset kX$.
\end{theorem}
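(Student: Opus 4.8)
The plan is to set up an induction on $n=|B|$, where the inductive step combines two ingredients: a Diophantine approximation argument that replaces $B$ by a set whose elements are nearly multiples of a common value, and a dyadic construction that handles the "near-integer" case efficiently. For the base case $n=1$ we have $B=\{b\}$ and $k(B\cup -B)\cap\R_{\ge0}$ is just $\{0,b,2b,\dots,kb\}$ together with nothing else (the negative contributions only make sums smaller), so taking $X=\{b\}$ suffices.

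For the inductive step, first I would normalise so that $B\subset\R_{\ge0}$ with $\min B>0$, and apply simultaneous Diophantine approximation (Dirichlet's theorem) to find a positive real $\delta$ and integers $m_1,\dots,m_n$ with $|b_i-m_i\delta|$ tiny relative to $\delta$ — small enough that for any choice of $k$ elements $b_{i_1},\dots,b_{i_k}$ (with signs) the sum $\sum_j \pm b_{i_j}$ lies within, say, $\delta/(3k)$ of $\delta\sum_j\pm m_{i_j}$, and in particular any two such sums that are distinct differ by at least $\delta/2$. The point of this "separation of scales" is that a $k$-fold signed sum of elements of $B$ is determined, up to an error $\ll\delta$, by the integer $\delta$-multiple it approximates; the exponentially small gap one gets from Dirichlet (the $m_i$ can be as large as roughly $\delta^{-n}$) is exactly the source of one factor of $n$ in the final bound. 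Now for the nonnegative $k$-fold signed sums at scale comparable to the $m_i$, I would run a variant of the dyadic partitioning construction from the proof of \Cref{thm:NvsZupper2}: writing each target as $\delta$ times an integer of the form $\sum_j \pm m_{i_j}$ (plus a controlled small correction), one covers all such integers using $O(n\log n)$ "breakpoint" elements placed at the dyadic scales between the $m_i$, and then scales back by $\delta$ and pushes the small corrections into $X$ as well. This yields $O(n\log n)$ elements handling everything at this scale.

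The remaining contributions — those $k$-fold signed sums whose value is much smaller than $\delta$, roughly the ones that "collapse" because the chosen $m_{i_j}$ sum to something small or zero — are handled by reducing modulo the arithmetic-progression structure: after subtracting off the dominant $\delta$-multiple, what is left is a $k$-fold signed sum of the residuals $b_i-m_i\delta$, and by rescaling these $n$ residuals one reduces to the same problem with one fewer element (the common difference $\delta$ can be chosen, as in the paragraph above, so that the number of relevant values drops from $n$ to $n-1$, e.g. by absorbing one element into the progression). Applying the inductive hypothesis gives a set of size $\le 2(n-1)^3 k\log k$ for this piece, and scaling it back costs only a multiplicative constant. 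Adding the $O(n\log n)$ elements from the dyadic step, the total is bounded by $2(n-1)^3k\log k + O(n^2 k\log k) \le 2n^3 k\log k$ once the constants are tracked, since $2n^3 - 2(n-1)^3 \ge 6n^2 - 6n + 2$ leaves room for the dyadic term.

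The main obstacle, and the step requiring the most care, is making the "separation of scales" genuinely clean: I need to verify that every nonnegative element of $k(B\cup -B)$ falls unambiguously into exactly one of the two regimes (dominant-scale vs. collapsed), that the dyadic construction in the dominant regime correctly produces $k$-fold representations over $\R_{\ge0}$ rather than merely over $\R$ (the nonnegativity constraint is what forced the $\log n$ loss in the $k=2$ case and must be respected here too), and that the correction terms from Diophantine approximation, which propagate through $k$ additions, stay small enough not to blur distinct scales — this is why I want the approximation error to beat $\delta/(3k)$ with the explicit factor $k$. Getting the bookkeeping of these three error sources to close the induction with the stated constant $16k\log k$ (equivalently $2n^3k\log k$ in \Cref{RvsR+}) is where the real work lies; the combinatorial construction and the induction skeleton are comparatively routine.
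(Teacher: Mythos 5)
Your overall architecture matches the paper's exactly: normalise, use pigeonhole/Dirichlet to find a step $\delta$ (the paper's $L\geq C^{-n-1}$) with $b_i=m_i\delta+z_i$ and errors $|z_i|\leq \delta/C$ for $C$ of order $k$, split the nonnegative signed $k$-sums into those $\geq \delta/2$ (handled by a dyadic covering set of size $O(n^2k\log k)$, the exponent $2$ coming from $\log(1/\delta)=O(n\log k)$) and those $<\delta/2$ (which collapse to signed $k$-sums of the residuals), arrange one coincidence among the residuals ($z_1=z_n$ in the paper) so the residual set has $n-1$ elements, and induct. The accounting $2(n-1)^3k\log k+O(n^2k\log k)\leq 2n^3k\log k$ is also the paper's.

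The genuine gap is the covering lemma for the large elements, which you defer as ``where the real work lies'' and sketch only as placing ``$O(n\log n)$ breakpoint elements at the dyadic scales between the $m_i$,'' by analogy with the $k=2$ construction. That analogy does not transfer: the difficulty is that a target $\sum_{j\leq q}x_{i_j}-\sum_{j>q}x_{i_j}\geq L/2$ contains genuinely negative summands, each of which must be replaced by a \emph{nonnegative} element of $X$, and a breakpoint scheme on the integers $m_i$ does not produce such replacements. The paper's actual mechanism (\Cref{bigdifcover}) is different: for each dyadic scale $2^m$ one rounds the positive summands down and the negative ones up, forming $D_m=\sum_{j\leq q}\lfloor 2^m x_{i_j}\rfloor-\sum_{j>q}\lceil 2^m x_{i_j}\rceil$; one shows $D_0\leq 1$ and $D_m>0$ for $m\approx\log_2(3k/L)$, and that $D_m$ cannot jump from negative to $\geq k$ in one step, so some scale $m_0$ has $D_{m_0}\in\{0,\dots,k-1\}$. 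At that scale each summand is replaced by the nonnegative quantity $x_i-\lfloor 2^{m_0}x_i\rfloor/2^{m_0}$ or $\lceil 2^{m_0}x_i\rceil/2^{m_0}-x_i$, with the leftover integer $D_{m_0}$ absorbed into one term (whence the extra factor $k$ from the shifts $p\in\{0,\dots,k-1\}$ in $X_{m_0}$, giving $|X|\leq nk\log_2(3k/L)$ rather than $n\log_2(3k/L)$). This rounding-and-scale-selection argument is the one nontrivial new idea of the proof and is absent from your proposal. A small additional slip: in your base case $X=\{b\}$ gives $kX=\{kb\}$ only, so it misses $(k-2)b,(k-4)b,\dots$; you need $X=\{0,b\}$.
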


As discussed in the introduction, our approach is to find an arithmetic progression (AP) that  approximates $B$, 
effectively deal with all the big elements of $k(B\cup -B)$, quotient out by the step size of the AP 
to reduce the number of elements, and then iterate.

We first find the appropriate AP.

\begin{lemma}\label{APfindinglem}
Consider reals $0\leq x_1<\dots<x_n=1$ so that $x_1\leq 1-1/C$ with $C\in\mathbb{N}$.
Then there exist $L\geq C^{-n-1}$ and $y_i\in \mathbb{N}$ so that, writing $x_i=y_iL+z_i$, 
each $|z_i|\leq L/C$ and $z_1=z_n$.
\end{lemma}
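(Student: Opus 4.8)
The plan is to reduce the statement to a standard simultaneous Diophantine approximation (Dirichlet box) argument. The only delicate requirement is $z_1=z_n$; the rest just says that the $x_i$ are simultaneously close to multiples of $L$. The key observation is that $z_1=z_n$ is equivalent to $x_n-x_1=1-x_1$ being an exact positive integer multiple of $L$, so I would \emph{not} hunt for an arbitrary good step size, but restrict attention to step sizes of the form $L=(1-x_1)/q$ with $q\in\N$. Setting $\beta\eqdef 1-x_1$ (so $1/C\le\beta\le 1$ by the hypothesis $x_1\le 1-1/C$) and $\alpha_i\eqdef x_i/\beta$ for $1\le i\le n$, the target bound $|z_i|\le L/C$ becomes $\lVert q\alpha_i\rVert\le 1/C$ (distance to the nearest integer), while the relation $z_1=z_n$ will fall out automatically from $\alpha_n=\alpha_1+1$ once the nearest integers are chosen consistently.

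So the heart of the argument is to find a positive integer $q\le C^n$ with $\lVert q\alpha_i\rVert<1/C$ for all $i$. I would partition $[0,1)^n$ into the $C^n$ half-open axis-parallel boxes of side $1/C$ and consider the $C^n+1$ points $\big(\{q\alpha_1\},\dots,\{q\alpha_n\}\big)$ for $q=0,1,\dots,C^n$. By pigeonhole two of them, say for $q_1<q_2$, lie in the same box; then $q\eqdef q_2-q_1\in\{1,\dots,C^n\}$ and $y_i\eqdef\lfloor q_2\alpha_i\rfloor-\lfloor q_1\alpha_i\rfloor$ satisfy $q\alpha_i-y_i=\{q_2\alpha_i\}-\{q_1\alpha_i\}\in(-1/C,1/C)$. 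Each $y_i$ is a non-negative integer, since the $\alpha_i$ are non-negative and $q_1<q_2$.

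To finish, set $L\eqdef\beta/q$; from $1/C\le\beta\le 1$ and $1\le q\le C^n$ we get $C^{-n-1}\le L\le 1$, so in particular $L\ge C^{-n-1}$. Put $z_i\eqdef x_i-y_iL=L(q\alpha_i-y_i)$, so that $x_i=y_iL+z_i$ and $|z_i|<L/C$ for every $i$; and since $\alpha_n=\alpha_1+1$ forces $\lfloor q_j\alpha_n\rfloor=\lfloor q_j\alpha_1\rfloor+q_j$, we obtain $y_n=y_1+q$, whence $z_n-z_1=L\big((q\alpha_n-q\alpha_1)-(y_n-y_1)\big)=L(q-q)=0$. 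This produces the required $L\ge C^{-n-1}$ and $y_i\in\N$. I expect the main obstacle to be simply spotting the right parametrisation: once one sees that $z_1=z_n$ pins $L$ down to the form $(1-x_1)/q$, the remainder is routine quantitative pigeonhole, and the exponential bound $L\ge C^{-n-1}$ is exactly the (unavoidable) price of demanding approximation quality $1/C$ in all $n$ coordinates at once.
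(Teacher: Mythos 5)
Your proof is correct and takes essentially the same route as the paper: both arguments restrict attention to scaling factors $\lambda=1/L$ lying in the arithmetic progression $\frac{1}{x_n-x_1}\Z$ (which is exactly what forces $z_1=z_n$) and then apply the $n$-dimensional Dirichlet pigeonhole over $C^n+1$ multiples to get $\|q\alpha_i\|<1/C$, yielding $L\ge C^{-n-1}$. Your write-up is a cleaner, more explicit rendering of the paper's torus-map formulation, and additionally verifies $y_i\ge 0$, which the paper leaves implicit.
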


\begin{proof}
Consider the map $q\colon\R\to(\R/\Z)^n;\lambda\mapsto ([\lambda x_i])_{i=1}^n$, 
where $[\lambda x_i]$ is the fractional part of $\lambda x_i$. 
Consider the subset $H\eqdef\{z\in (\R/\Z)^n: z_1=z_n\}$ and its preimage $q^{-1}(H)\subset\R$. 
Since $x_n/x_1\geq 1-1/C$, we find that $q^{-1}(H) = \alpha\Z$ is an AP with step size $\alpha\leq C$. 
By the pigeonhole principle there exist $i,j\leq \alpha C^n\leq C^{n+1}$ so that 
$\|q((i-j)\alpha)\|_\infty = \|q(i\alpha)-q(j\alpha)\|_\infty \leq 1/C$. 
Let $L\eqdef\frac{1}{(i-j)\alpha}=\Omega(C^{-n-1})$ and write $x_i=y_iL+z_i$ with $y_i\in \Z$ and $|z_i|\leq L/C$.
\end{proof}

The following lemma establishes the `separation of scale' mentioned in the introduction:
it shows that elements of $k (B\cup (-B))$ which are small compared to the scale of the AP
are expressible in $ k (B'\cup (-B'))$ where $B'$ is on the scale of the error terms in the approximation.

\begin{lemma}\label{smalldifcover}
Let $B\eqdef\{x_i\}_{i=1}^n$ where $x_i=y_iL+z_i$, with $y\in\Z$,  $|z_i|\leq L/C$ and $C>2k$.
Let $B'\eqdef\{z_i\}_{i=1}^n$. Then 
$$\{x\in k (B\cup (-B)):|x|< L/2\}\subset k (B'\cup (-B')).$$
\end{lemma}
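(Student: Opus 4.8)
The plan is to take an element $x \in k(B \cup (-B))$ with $|x| < L/2$, write it in the form $x = \sum_{j=1}^k \epsilon_j x_{i_j}$ with signs $\epsilon_j \in \{+1,-1\}$, and split each term $x_{i_j} = y_{i_j}L + z_{i_j}$ into its ``coarse part'' $y_{i_j}L$ and its ``fine part'' $z_{i_j}$. This gives $x = \bigl(\sum_j \epsilon_j y_{i_j}\bigr)L + \sum_j \epsilon_j z_{i_j}$. The point is that $\sum_j \epsilon_j z_{i_j}$ is an element of $k(B' \cup (-B'))$ by construction, so it suffices to show that the coarse part vanishes, i.e.\ $\sum_j \epsilon_j y_{i_j} = 0$.

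The argument for this is a size comparison. On one hand, $\bigl|\sum_j \epsilon_j z_{i_j}\bigr| \le k \cdot L/C < L/2$ since $C > 2k$. On the other hand, $|x| < L/2$ by hypothesis. Therefore $\bigl|\bigl(\sum_j \epsilon_j y_{i_j}\bigr)L\bigr| = \bigl|x - \sum_j \epsilon_j z_{i_j}\bigr| < L/2 + L/2 = L$. But $\sum_j \epsilon_j y_{i_j}$ is an integer (each $y_i \in \Z$), so an integer multiple of $L$ with absolute value strictly less than $L$ must be $0$. Hence $\sum_j \epsilon_j y_{i_j} = 0$, and so $x = \sum_j \epsilon_j z_{i_j} \in k(B' \cup (-B'))$, as desired.

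I do not anticipate a serious obstacle here: the statement is essentially an observation that the coarse and fine scales are separated by more than a factor of $k$, so a small element of the sumset cannot ``use'' the coarse scale at all. The only point requiring a little care is to make sure the representation as a signed sum of exactly $k$ elements is used correctly — an element of $k(B \cup (-B))$ is by definition a sum of exactly $k$ elements of $B \cup (-B)$, so the count of terms is $k$ and the bound $k \cdot L/C$ is exactly what one needs against the hypothesis $C > 2k$. One should also note that the indices $i_j$ need not be distinct, but this causes no issue since we only use $|z_{i_j}| \le L/C$ for each term.
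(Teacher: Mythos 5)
Your proof is correct and is essentially the same argument as the paper's: both isolate the coarse part $\bigl(\sum_j \epsilon_j y_{i_j}\bigr)L$, bound the fine part by $kL/C < L/2$, and conclude that an integer multiple of $L$ of absolute value less than $L$ must vanish. The only cosmetic difference is that the paper phrases the final step contrapositively (a nonzero coarse part would force $|x| > L/2$), whereas you argue directly from $|x| < L/2$; the content is identical.
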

\begin{proof}
Consider $\sum_{j=1}^k \epsilon_j x_{i_j}\in k (B\cup (-B))$ where $\epsilon_j\in\{\pm 1\}$. 
Note that $\left|\sum_{j=1}^k \epsilon_j z_{i_j}\right|\leq \sum_{j=1}^k  \left|z_{i_j}\right|\leq kL/C<L/2$. 
On the other hand, since $y_i\in\Z$, if $\left|\sum_{j=1}^k \epsilon_j y_{i_j}L\right|>0$, 
then in fact $\left|\sum_{j=1}^k \epsilon_j y_{i_j}L\right|\geq L$, so that $\left|\sum_{j=1}^k \epsilon_j x_{i_j}\right|>L/2$. 
Hence, if $\left|\sum_{j=1}^k \epsilon_j x_{i_j}\right|<L/2$, then $\left|\sum_{j=1}^k \epsilon_j y_{i_j}L\right|=0$, 
so that $\sum_{j=1}^k \epsilon_j x_{i_j}=\sum_{j=1}^k \epsilon_j z_{i_j}\in k (B'\cup (-B')) $.
\end{proof}

In the next lemma we efficiently cover the elements of $k (B\cup (-B))$ on the scale of the AP.

\begin{lemma}\label{bigdifcover}
 Let $B=\{x_1<\dots< x_n=1\}\subset\R_{\geq0}$ and $L>0$. 
 Then there exists a set $X\subset\R_{\geq0}$ with $|X|\leq nk\log_2(3k/L)$ such that 
 $$\{x\in k (B\cup (-B)):x\geq L/2\}\subset k X. $$
\end{lemma}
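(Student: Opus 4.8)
The plan is to build $X$ by a dyadic decomposition of the range of values. Since every $x\in k(B\cup -B)$ satisfies $|x|\le k$ (as $B\subset[0,1]$), it is enough to cover, for each scale $\sigma$ of the form $2^{t}L/2$ with $t\ge 0$ and $\sigma<k$, the set $T_\sigma\eqdef\{x\in k(B\cup -B):\sigma\le x<2\sigma\}$ by a set $X_\sigma\subset\R_{\ge 0}$ with $|X_\sigma|$ of order $n$; there are at most $\log_2(4k/L)$ such scales, so with $X\eqdef\bigcup_\sigma X_\sigma$ this gives a bound of the shape $O\bigl(n\log_2(k/L)\bigr)$, and optimising the size of each $X_\sigma$ together with the exact count of scales (and a mild separate treatment of very small $k$) yields the stated $nk\log_2(3k/L)$.

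For a fixed scale $\sigma$ I would put $\eta\eqdef\sigma/k$ and round each element of $B$ down to the grid $\eta\Z$: write $x_i=\widehat x_i+\bar x_i$ with $\widehat x_i\in\eta\Z_{\ge 0}$ and $\bar x_i\in[0,\eta)$. Then set
\[
  X_\sigma\eqdef\{\,j\eta+\bar x_i: i\in[n],\ 0\le j\le 3\,\}\ \cup\ \{\,j\eta-\bar x_i: i\in[n],\ 1\le j\le 3\,\},
\]
a set of at most $7n$ nonnegative reals. Given $x=\sum_{j=1}^{k}\epsilon_j x_{i_j}\in T_\sigma$ with signs $\epsilon_j\in\{\pm1\}$, of which $p$ equal $+1$ and $m=k-p$ equal $-1$, write $x=\widehat x+\rho$ where $\widehat x\eqdef\sum_j\epsilon_j\widehat x_{i_j}\in\eta\Z$ and $\rho\eqdef\sum_j\epsilon_j\bar x_{i_j}$, so $-m\eta<\rho<p\eta$ (note $p\ge 1$ since $x>0$). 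From $\sigma\le x<2\sigma$ and $p\eta\le\sigma$ one gets $\widehat x=x-\rho>0$, and from $\widehat x>\sigma-p\eta=m\eta$ and $\widehat x<2\sigma+m\eta\le 3k\eta$ one gets $m<U\le 3k$ for the integer $U\eqdef\widehat x/\eta$. Now distribute the $U$ ``units of $\eta$'' among the $k$ terms: pick integers $j_1,\dots,j_k$ with $\sum_\ell j_\ell=U$, each $0\le j_\ell\le 3$, and $j_\ell\ge 1$ whenever $\epsilon_\ell=-1$ — possible, since one can place a unit on each of the $m$ negative slots (as $U\ge m$) and then spread the remaining $U-m\le 3k-m$ units within the spare capacity $2m+3p=3k-m$. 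Setting $c_\ell\eqdef j_\ell\eta+\epsilon_\ell\bar x_{i_\ell}$ gives $c_\ell\ge 0$ (on a negative slot, $j_\ell\ge 1$ forces $c_\ell\ge\eta-\bar x_{i_\ell}>0$), $c_\ell\in X_\sigma$, and $\sum_\ell c_\ell=U\eta+\rho=\widehat x+\rho=x$, so $x\in kX_\sigma$, as required.

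The heart of the argument — and the step I expect to be the only real obstacle — is precisely the bookkeeping in the last paragraph: one must carry the fractional remainders $\epsilon_\ell\bar x_{i_\ell}$ by \emph{nonnegative} pieces, which forces every ``negative slot'' to absorb at least one grid unit, and the reason this is affordable is the conservation-type identity $\sigma-p\eta=m\eta$, which guarantees that $\widehat x$ is large enough to feed all $m$ of them while $\widehat x<3k\eta$ keeps the per-slot demand $O(1)$ (hence $|X_\sigma|=O(n)$). Comparing the resulting $O\bigl(n\log_2(k/L)\bigr)$-type bound with the claimed $nk\log_2(3k/L)$, and choosing the scales so that the constant $3k$ inside the logarithm comes out exactly, are then routine. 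This lemma, combined with \Cref{smalldifcover} and \Cref{APfindinglem}, is what will feed into the proof of \Cref{RvsR+} and hence \Cref{iteratedNvsZthm}.
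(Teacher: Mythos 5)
Your argument is correct and, at its core, runs on the same mechanism as the paper's proof: round the $x_i$ to a dyadic grid, observe that the resulting integer combination is a small non-negative multiple of the grid spacing, and redistribute that multiple among the $k$ summands so that each piece is non-negative and drawn from a pre-built set of size $O(n)$ per scale. The genuine difference is how the grid is selected. The paper uses the rounding precision $2^{-m}$ as the scale parameter and, for each target element, locates the critical $m_0$ at which $D_m=\sum\lfloor 2^mx_{i_j}\rfloor-\sum\lceil 2^mx_{i_j}\rceil$ first becomes non-negative, using the doubling inequality $D_{m_0}\le 2D_{m_0-1}+k$ to show $D_{m_0}\in\{0,\dots,k-1\}$; it then dumps the entire correction onto one designated summand, which costs a factor of $k$ in the size of each $X_m$. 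You instead bin by the magnitude of the target ($x\in[\sigma,2\sigma)$, grid spacing $\eta=\sigma/k$), which makes the scale selection trivial, and you spread the correction $U$ over all $k$ slots with at most $3$ units each; your feasibility check ($m<U\le 3k$ against the achievable range $[m,3k]$ of $\sum_\ell j_\ell$) is correct, as is the non-negativity of each piece $c_\ell$. This buys a per-scale set of size $7n$ rather than roughly $n(k+1)$, i.e.\ a better $k$-dependence for large $k$. The one loose end is quantitative: your total is about $7n\log_2(4k/L)$, which exceeds the stated $nk\log_2(3k/L)$ when $k\le 7$, and the promised ``separate treatment of very small $k$'' is not actually supplied (there is also a negligible boundary issue if $x=k$ falls exactly at the endpoint of your top scale, fixable by closing the interval). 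This is a constant-factor shortfall only; it is harmless for the application in \Cref{RvsR+}, whose constants are not optimised, but as written your proof establishes the lemma with a slightly different constant rather than the exact bound in the statement.
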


\begin{proof}
Fix $\sum_{j=1}^q  x_{i_j}-\sum_{j=q+1}^k  x_{i_j}\in k (B\cup (-B))$ with $\sum_{j=1}^q  x_{i_j}-\sum_{j=q+1}^k  x_{i_j}\geq L/2$. 

For $0 \le m \le \log(3/L + k)$ we consider the differences
$$D_m\eqdef\sum_{j=1}^q  \lfloor 2^m x_{i_j}\rfloor-\sum_{j=q+1}^k  \lceil 2^m x_{i_j}\rceil.$$
Since $x_i\in (0,1)$ implies $\lceil x_{i}\rceil=1$ and $\lfloor x_{i}\rfloor=0$, it follows that $D_0=1+q-k$ or $D_0=q-k$ depending on whether $n\in \{i_1,\dotsc,i_q\}$ or not.
In either case $D_0\leq 1$. Also, for any $m$ we have
$$\lfloor 2^m x_{i_j}\rfloor\leq 2^mx_i\leq \lceil 2^m x_{i_j}\rceil\leq \lfloor 2^m x_{i_j}\rfloor+1, $$ 
$$\ \text{ so } \ \ D_m\geq 2^m\Bigl(\sum_{j=1}^q  x_{i_j}-\sum_{j=q+1}^k  x_{i_j}\Bigr) - k.$$
Hence, for $m=\log(3/L + k)$, we have $D_m\geq 2^mL/2 - k >0$.

We claim that there exists an $m_0\in\{0,1,\dots,\log_2(3k/L)\}$ such that $D_{m_0}\in\{0,1,\dots, k-1\}$.
To see this, we may assume that $D_0<0$, for otherwise we are done, in view of $D_0\leq 1$.
Let $m_0$ be minimal so that $D_{m_0}\geq 0$. Note that $m_0\geq 1$, so that $D_{m_0-1}<0$ exists. 
For any number $x$, we have $\lfloor 2^{m_0} x\rfloor\leq 2\lfloor 2^{m_0-1} x\rfloor +1$ 
and $\lceil 2^{m_0} x\rceil\geq 2\lceil 2^{m_0-1} x\rceil-1$. 
Hence, $D_{m_0}\leq 2D_{m_0-1}+k$. Since $D_{m_0-1}\leq -1$, the claim follows.

With these observations in place, we can construct $X$ as follows. For $m\leq \log_2(3k/L)$ let
$$X_m\eqdef\left\{x_i- \frac{\left\lfloor2^m x_i\right\rfloor - p}{2^m}:1\leq i\leq n,0\leq p\leq k-1\right\}\cup\left\{ \frac{\left\lceil2^m x_i\right\rceil}{2^m}-x_i:i=1,\dots,n\right\},$$
and let $X\eqdef\bigcup_{m=0}^{\log_2(3k/L)} X_m$. First note that indeed $|X|\leq nk\log_2(3k/L)$. 
To prove that $k X$ indeed covers all big differences, 
consider any $\sum_{j=1}^q  x_{i_j}-\sum_{j=q+1}^k  x_{i_j}\in k (B\cup (-B))$ 
with $\sum_{j=1}^q  x_{i_j}-\sum_{j=q+1}^k  x_{i_j}\geq L/2$. 
Using the claim, find $m_0$ so that $2^{m_0}D_{m_0}\in \{0,\dots, k-1\}$. 
Note that $X_{m_0}\subset X$ contains the elements
$$y_{j}\eqdef\begin{cases}
x_{i_1}-\frac{\left\lfloor2^{m_0} x_{i_1}\right\rfloor - D_{m_0}}{2^{m_0}}&\text{if $j=1$}\\
 x_{i_j}-\frac{\left\lfloor2^{m_0} x_{i_j}\right\rfloor}{2^{m_0}} &\text{if $j=2,\dots  q$}\\
  \frac{\left\lceil2^{m_0} x_{i_j}\right\rceil}{2^{m_0}}-x_{i_j}&\text{if $j=q+1,\dots  k$},
\end{cases}$$
so that $\sum_{j=1}^k y_j= \sum_{j=1}^q  x_{i_j}-\sum_{j=q+1}^k  x_{i_j} $. The lemma follows.
\end{proof}

Having prepared the ingredients, we now come to the proof of the main result of this section.

\begin{proof}[Proof of \Cref{RvsR+}]
Set $C=3k$.
We proceed by induction on $n$. Write $B_n=B=\{x_1<\dots<x_n\}$.
First note that multiplying all elements by the same factor doesn't affect the problem, 
so we may assume $x_n=1$. We distinguish two cases:~either $x_1\geq 1-1/C$ or not.

If $x_1\geq 1-1/C$, we let $L=1$ and note that $x_i=y_i L+ z_i$ with $y_i=1$ for all $i$ and $|z_i|\leq L/C$. Hence, by \Cref{bigdifcover} there exists an $X^{n,1}\subset\R_{\geq0}$ with $|X^{n,1}|\leq nk\log_2(3k)\leq 2nk\log(k)$ such that
$$\{x\in k (B\cup (-B)):x\geq 1/2\}\subset k X^{n,1}.$$
By \Cref{smalldifcover}, we find that if we let $B'_n\eqdef\{|z_i|:i=1,\dots, n\}$, then
$$\{x\in k (B\cup (-B)):x< 1/2\}\subset k (B'_n\cup -B'_n),$$
so it remains to cover $k (B'_n\cup -B'_n)$. We write $B_n'=\{x'_1<\dots<x'_n\}$ 
and renormalise so that $x'_n=1$. Since $z_n=0$, we have $x'_1=0<1-1/C$, so we reduce to the next case.

For notational convenience, we henceforth assume $x_1<1-1/C$ (rather than $x_1'<1-1/C$). 
Apply \Cref{APfindinglem} to find $L\geq C^{-n-1}$ so that we can write $x_i=y_iL+z_i$ 
with $y_i\in \Z$, each $|z_i|\leq L/C$ and $z_1=z_n$. By \Cref{bigdifcover}, 
we can find $X^{n,2}$ with $|X^{n,2}|\leq nk\log_2(3k/L)\leq nk\log_2(3kC^{n+1})\leq 2n^2k\log(k)$ so that 
$$\{x\in k (B_n\cup (-B_n)):x\geq L/2\}\subset k X^{n,2}.$$
By \Cref{smalldifcover}, we find that if we let $B_{n-1}\eqdef\{|z_i|:i=1,\dots, n\}$ then
$$\{x\in k (B_n\cup (-B_n)):x< L/2\}\subset k (B_{n-1}\cup -B_{n-1}).$$
Since $z_1=z_n$ we have $|B_{n-1}|\leq n-1$. 
Hence, by induction, there is $Y\subset\R_{\geq0}$ of size $|Y|=2(n-1)^3k\log(k)$ such that 
$k (B_{n-1}\cup -B_{n-1})\subset k Y$.
Combining these sets, we obtain
$$k (B\cup -B)\subset k (Y\cup X^{n,1}\cup X^{n,2}).$$
Finally, $|Y\cup X^{n,1}\cup X^{n,2}|\leq 2(n-1)^3k\log(k)+2nk\log(k)+2n^2k\log(k)\leq 2n^3k\log(k)$.
\end{proof}

\section{Concluding remarks and open problems}

\paragraph{Bases over \texorpdfstring{$\Q$}{Q} versus \texorpdfstring{$\Z$}{Z}.}

Here we believe that the upper bound of $2n$ should be tight.
In the vector model, for the case $k=2$ one needs to prove the following.

\begin{conjecture} \label{conj1}
Suppose $B_0,B_1 \subset \Q^n$ are such that $B_0+B_1$
contains every $e_i + e_j$ with $1 \le i \le j \le n$.
Then $|B_0|+|B_1| \ge 2n$.
\end{conjecture}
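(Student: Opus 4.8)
The plan is to mimic the argument for \Cref{thm:QvsZlowerbound} in the case $k=2$, but to push the bookkeeping from an estimate that is off by $O(\sqrt n)$ to an exact one. Write $d_i \eqdef \dim V_i^\perp$ where $V_i$ is the span of $B_i$, so $|B_i| \ge \dim V_i = n - d_i$ and it suffices to prove $d_0 + d_1 \le 0$, i.e.\ that $V_0 = V_1 = \Q^n$. Suppose not. By \Cref{lem:coord} applied to $V_0$ there is a coordinate subspace $W$ spanned by $d_0$ standard basis vectors $\{e_i : i \in I_0\}$ with $|W \cap (V_0 + t)| \le 1$ for all $t$; similarly a coordinate subspace spanned by $\{e_j : j \in I_1\}$, $|I_1| = d_1$, for $V_1$. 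The first move is to understand how the hypothesis $\{e_i + e_j : i \le j\} \subseteq B_0 + B_1$ interacts with these two coordinate subspaces.

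The key structural claim I would aim for is: \emph{if $d_0 \ge 1$ then in fact $|B_0| \ge n - d_0$ can be improved, because covering the vectors $e_i + e_j$ with $i \in I_0$ forces extra elements into $B_1$, and symmetrically.} Concretely, fix $i_0 \in I_0$. Every vector $e_{i_0} + e_j$ with $j \in [n]$ must be written as $b_0 + b_1$ with $b_0 \in B_0 \subseteq V_0$, $b_1 \in B_1$; since $e_{i_0} + e_j - b_1 \in V_0$ and we are free to vary $j$, the vectors $\{e_{i_0} + e_j : j \in [n]\}$ all lie in $B_1 + V_0$. Intersecting with the coordinate flat through $e_{i_0}$ and using the "at most one point per translate of $V_0$" property of $W \ni e_{i_0}$, I want to conclude that each fibre $B_1 + v$ ($v \in V_0$) accounts for only a bounded number of these, hence $|B_1|$ is large — large enough to force $d_1$ small, and symmetrically $d_0$ small, and then a direct counting of $\{e_i+e_j : i,j \in I_0 \cup I_1\}$ (which must be covered by $B_0 + B_1$ with $B_0, B_1$ of controlled size) yields the contradiction. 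The cleanest route may be to argue by contradiction with $d_0 + d_1 \ge 1$ and split on whether $I_0 \cap I_1$ is empty: if some index lies in both $I_0$ and $I_1$, covering the single vector $2e_i$ already interacts with both "one point per translate" conditions; if $I_0, I_1$ are disjoint, one counts $e_i + e_j$ for $i \in I_0$, $j \in I_1$ and gets $|I_0|\cdot|I_1| \le (\text{something like } |B_0|+|B_1|)$, again too weak unless combined with the fibre argument above.

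I expect the main obstacle to be precisely the passage from "off by a constant-times-$\sqrt n$" to "exactly $2n$". The argument in \Cref{thm:QvsZlowerbound} is lossy in two places: the codimension bound $d = O(\sqrt n)$ and the crude sumset size estimate $|(k-1)B + D| \le \binom{m+k-2}{k-1}k^{k-1}$. For the conjecture one needs a genuinely exact extremal statement, presumably of the form: \emph{the only way $B_0 + B_1 \supseteq \{e_i + e_j : i \le j\}$ with $|B_0| + |B_1| = 2n$ is (up to the obvious symmetries) $B_0 = B_1 = \{e_1, \dots, e_n\}$ or a small number of degenerate variants.} Proving such rigidity — controlling not just the dimension but the geometry of $B_0, B_1$ inside their spans, and handling translated/affine configurations — is the hard part, and is likely why this is left as a conjecture rather than a theorem.
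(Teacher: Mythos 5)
This statement is \Cref{conj1}, which the paper poses as an \emph{open conjecture}; the paper contains no proof of it, so there is nothing to compare your argument against. The only question is whether your proposal itself constitutes a proof, and it does not --- you say as much in your final paragraph. Let me name the gaps concretely rather than just agreeing that the problem is hard.

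First, your opening reduction is not merely difficult but false as stated. You claim it suffices to prove $d_0+d_1\le 0$, i.e.\ $V_0=V_1=\Q^n$, and you then ``suppose not'' in order to derive a contradiction. But there are legitimate configurations with $d_0+d_1\ge 1$: take $B_0=\{0\}$ and $B_1=\{e_i+e_j: i\le j\}$, so $V_0=\{0\}$ has codimension $n$, yet $|B_0|+|B_1|=1+\binom{n+1}{2}\ge 2n$. Full-span rigidity is simply not what the conjecture asserts; what is needed is a quantitative trade-off (codimension deficiency in $B_0$ must be compensated by excess cardinality in $B_1$), which is exactly the shape of the extension mentioned right after the conjecture: $|B_0|=n-t$ should force $|B_1|\ge n+\binom{t+1}{2}$. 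Your contradiction hypothesis therefore cannot lead to a contradiction, and the argument must be restructured from the start.

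Second, your ``fibre argument'' does not deliver what you want from it. The conclusion of \Cref{lem:coord} controls only $|W\cap(V_0+t)|$ for the coordinate subspace $W$ spanned by $\{e_i: i\in I_0\}$; the vectors $e_{i_0}+e_j$ for general $j\in[n]$ do not lie in $W$ (nor in any fixed translate of it), so the ``one point per translate of $V_0$'' property says nothing about them. What the lemma does give is precisely the paper's bound $|B_1|\ge\binom{d_0+1}{2}$ coming from the vectors $e_i+e_j$ with \emph{both} $i,j\in I_0$, and combining this with $|B_i|\ge n-d_i$ yields only $|B_0|+|B_1|\ge 2n-O(\sqrt{n})$, which is \Cref{thm:QvsZlowerbound} for $k=2$, not the conjecture. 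The passage from this lossy estimate to the exact bound $2n$ --- the rigidity statement you gesture at in your last paragraph --- is the entire content of the open problem, and your proposal supplies no mechanism for it.
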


An extension of \Cref{conj1} would be to show that
if $|B_0|=n-t$ for some $t \ge 0$ then $|B_1| \ge n + \tbinom{t+1}{2}$.
It is also natural to consider \Cref{conj1} over finite fields,
for which we expect the answer is essentially the same,
changing $2n$ to $2n-2$ over $\F_{2^m}$ (when $e_i+e_i=0$).

\paragraph{Bases over \texorpdfstring{$\N$}{N} versus \texorpdfstring{$\Z$}{Z}.}

Is there is any polynomial gap for fixed $k$? We believe not.

\begin{conjecture} \label{conj2}
For any $k \in \N$ and $\veps>0$ there is $n_0 \in \N$ so that
if $A \subseteq \N \cap kB$ for some $B \subset \Z$ with $|B|=n>n_0$
then $A \subseteq kC$ for some $C \subset \N$ with $|C| \le n^{1+\veps}$.
\end{conjecture}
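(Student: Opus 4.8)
The plan is to replace the value-scale iteration of \Cref{sec:NZk}, which descends through up to $n$ scales and loses a factor of $n$ at each, by an \emph{index-scale} construction in the spirit of the $k=2$ argument (\Cref{thm:NvsZupper2}), which uses only $\log n$ scales. As in \Cref{RvsR+} it suffices to work over the reals: given $B \subset \R_{\ge 0}$ with $|B| = n$ one wants $X \subset \R_{\ge 0}$ with $|X| \le n^{1+\veps}$ and $\bigl(k(B \cup -B)\bigr) \cap \R_{\ge 0} \subseteq kX$, since rounding each element of $X$ down and up recovers the statement over $\N$ exactly as in the proof of \Cref{thm:QvsZupperbound}. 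Write $B = \{x_1 < \dots < x_n\}$.

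Recall how \Cref{thm:NvsZupper2} handles a near-cancellation $x_r - x_t \ge 0$: it splits at the index $s$ marking the end of the common binary prefix of $r$ and $t$, so that $x_r - x_t = (x_r - x_s) + (x_s - x_t)$ with both summands covered, and the dyadic scale lives on the indices in $[n]$, whence only $\log_2 n$ scales are needed. For general $k$ I would, given an element $v = \sum_{j \le q} x_{i_j} - \sum_{j > q} x_{i_j} \ge 0$ of $k(B \cup -B)$, look at the coarsest dyadic index-block $I \subseteq [n]$ that contains more than one of $i_1,\dots,i_k$, choose a representative index $s = s_I$, and rewrite $v$ by inserting copies of $x_s$ with signs so as to separate the contributions of the indices inside $I$ from those outside; the `outside' part then lives at a strictly coarser index-scale and is handled by (an inductive version of) the same construction applied to block representatives, while the `inside' parts recurse into strictly finer blocks. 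The residual near-cancellations within a block should, by a localised form of the separation-of-scales principle of \Cref{smalldifcover}, be controlled by the residues of the $x_i$ relative to $x_{s_I}$, so that each of the $O(\log n)$ scales contributes only about $n$ elements times a factor depending only on $k$ that counts how the inserted carries are distributed among the $k$ summands — the target being $|X| \le n\,(\log n)^{O_k(1)}$, comfortably below $n^{1+\veps}$ for $n$ large in terms of $k$ and $\veps$.

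The main obstacle is the interaction between near-cancellations at different scales when $k \ge 3$, which has no analogue for $k = 2$. For $k = 2$ a near-cancellation is the single difference $x_r - x_t$, and its one carry lands cleanly in an adjacent block; for $k \ge 3$ one element of $kB$ can display nested near-cancellations — $x_{i_1} + x_{i_2} \approx -x_{i_3}$ at one scale while $x_{i_1} + x_{i_2} + x_{i_3} \approx -x_{i_4} - x_{i_5}$ at a coarser one — and, worse, the value-cancellation $\sum_j \epsilon_j x_{i_j} \approx 0$ does not force the indices $i_1,\dots,i_k$ to be close, so `which indices share a block' and `which partial sums cancel' are genuinely independent features of $v$ that the decomposition must track simultaneously. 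Carries from such a decomposition can in principle cascade through all $O(\log n)$ scales, splitting a summand each time, which would push the exponent back towards $\mathrm{poly}(k)$; avoiding this requires choosing, for each $v$, a `scale-efficient' decomposition — most naturally by a greedy rule that resolves the coarsest cancellation first and never returns to a scale — together with an amortisation showing that the total number of splits incurred along a single decomposition is $O(k)$ rather than $O(k\log n)$. Getting this counting clean, and compatible with a single global bound on $|X|$, is where I expect essentially all the difficulty to lie; a sensible warm-up is to first establish $\ell^{(k)}_{\N}(A) \le n(\log n)^{O_k(1)}$, or even merely to improve the cubic exponent in \Cref{iteratedNvsZthm} to $2 + o(1)$, which should already be accessible to a less delicate version of this scheme.
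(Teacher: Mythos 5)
This statement is \Cref{conj2}, which the paper poses as an \emph{open conjecture}; there is no proof of it in the paper to compare against. The strongest result the paper actually proves in this direction is \Cref{iteratedNvsZthm}, a cubic bound $\ell^{(k)}_{\N}(A)\leq 16k\log(k)n^3$, obtained by the Diophantine-approximation iteration of \Cref{sec:NZk}, which is very far from the conjectured $n^{1+\veps}$. So the first thing to say is that you have not proved the statement, and neither have the authors.

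Your proposal is a research plan rather than a proof, and you are candid about where it is incomplete; let me name the gap more precisely. The reason the index-dyadic construction works for $k=2$ (\Cref{thm:NvsZupper2}) is that a nonnegative element of $2(B\cup -B)$ which is not already a sum of two nonnegative basis elements is forced to be a single difference $x_r-x_t$ with $t<r$: the cancellation structure is completely determined by the index order, and the common-binary-prefix index $s$ satisfies $t\le s\le r$, so both pieces $x_r-x_s$ and $x_s-x_t$ are automatically nonnegative and live at controlled index-scales. For $k\ge 3$ both of these features fail simultaneously. A nonnegative element $\sum_j \epsilon_j x_{i_j}$ can have near-cancelling sub-sums whose indices are arbitrarily far apart in $[n]$, so the coarsest dyadic index-block containing two of the $i_j$ carries no information about which partial sums are small; and when you ``insert copies of $x_{s_I}$ with signs'' there is no mechanism forcing the resulting inside and outside pieces to be nonnegative, which is the entire content of the $\N$-versus-$\Z$ problem. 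Your appeal to ``a localised form of the separation-of-scales principle of \Cref{smalldifcover}'' is exactly where the paper is forced to invoke \Cref{APfindinglem}, and that lemma only delivers an approximation quality $L\ge C^{-n-1}$ that degrades exponentially in $n$, which is what produces the factor-of-$n$ loss per iteration and hence the cubic bound. You do not explain how to obtain separation of scales without this exponential loss, nor do you supply the amortisation argument bounding the total number of splits by $O(k)$; both are asserted as desiderata, not proved. As it stands the proposal does not establish the conjecture, nor even the intermediate targets ($n(\log n)^{O_k(1)}$, or exponent $2+o(1)$) that you sensibly suggest as warm-ups.
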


\paragraph{Multiplication / higher dimensions.}

The change of domain question can be formulated in any abelian group.
For example, we could consider $G=\Z^2$, $D=\N^2$ 
and compare bases over $A \subset D$ over $D$ and over $G$.
The infinite-dimensional version of this question is even more natural,
as it is equivalent to taking $G = \R^\times_{>0}$ (the positive reals with multiplication)
and comparing multiplicative bases of  $A \subset \N_{>0}$ over $G$ and over $\N_{>0}$.
Here, we believe there might be a polynomial gap.

\begin{question} \label{conj3}
Are there $\veps>0$ and $n_0 \in \N$ so that for any $n>n_0$
there is $A \subseteq \N \cap BB$ for some $B \subset \R_{>0}$ with $|B|=n$
such that $A \not\subseteq CC$ for any $C \subset \N$ with $|C|<n^{1+\veps}$?
\end{question}


\bibliographystyle{unsrt}
\bibliography{ref.bib}

\end{document}